\theoremstyle{thmstyleone}%
\newtheorem{theorem}{Theorem}
\newtheorem{lemma}[theorem]{Lemma}
\newtheorem{corollary}{Corollary}[theorem]
\theoremstyle{thmstyletwo}%
\newtheorem{remark}{Remark}%
\theoremstyle{thmstylethree}%
\begin{document}

\title[A consistent kinetic Fokker-Planck model for gas mixtures]{A consistent kinetic Fokker-Planck model for gas mixtures}


\author*[1]{\fnm{Marlies} \sur{Pirner}}\email{marlies.pirner@uni-muenster.de}



\affil*[1]{\orgdiv{Institute of Analysis and Numerics}, \orgname{University of Münster}, \orgaddress{\street{Einsteinstr. 62}, \city{Münster}, \postcode{48161},  \country{Germany}}}




\abstract{ We propose a general multi-species Fokker-Planck model. We prove consistency of our model: conservation properties, positivity of all temperatures, H-Theorem and the shape of equilibrium as Maxwell distributions with the same mean velocity and temperature. Moreover, we derive the usual macroscopic equations from the kinetic two-species BGK model and compute explicitly the exchange terms of momentum and energy.}

\keywords{multi-fluid mixture, kinetic model, Fokker-Planck approximation, entropy inequality}



\maketitle

\section{Introduction}

\textcolor{black} {In this paper, we propose an extension of the Fokker-Planck equation for gas mixtures. It is often also referred as the Lenard-Bernstein \cite{PhysRev.112.1456} or Dougherty model \cite{10.1063/1.2746779} and is used for the description of a collisional plasma \cite{CROUSEILLES2004546,DUCLOUS20095072,TAITANO2021107861}. The Fokker-Planck equation can be derived as an approximation of the Landau-Fokker-Planck equation by replacing one distribution function in the quadratic Landau-Fokker-Planck operator by the equilibrium Maxwell distribution, for more details see \cite{Hu}. It still maintains the same main properties as the Landau-Fokker-Planck and the Boltzmann operator: conservation properties, H-Theorem and the same shape of equilibrium. The Fokker-Planck operator is computationally cheaper than the full Landau-Fokker-Planck or the full Boltzmann operator  and is therefore often used in numerical simulations instead of the full Landau-Fokker-Planck or the full Boltzmann operator, see for example  \cite{CROUSEILLES2004546,Gorji,Hepp}.}

\textcolor{black}{Another approximation of the Boltzmann operator is the Bathnagar-Gross-Krook (BGK) operator. It provides an approximation of the Boltzmann operator maintaining the conservation properties, the H-Theorem and the shape of equilibrium. A derivation of the BGK operator from the Boltzmann operator can be found in \cite{Pirner2018}. Here, the distribution function depending on the velocities after the interaction are replaced by the equilibrium and therefore assumes a close-to-equilibrium regime.}

Besides the one species BGK and Fokker-Planck model, there exist many extensions in the literature. \textcolor{black}{See  for example \cite{cite-key}, for BGK and Fokker-Planck models of the Boltzmann equation for gases with discrete levels of vibrational energy. In this paper, we will focus on extensions to gas mixtures. } There are many BGK models for gas mixtures proposed in the literature \cite{gross_krook1956,hamel1965,Greene,Garzo1989,Sofonea2001,Pirner,haack,Bobylev,AndriesAokiPerthame2002}, many of which satisfy the basic requirements of conservation properties and H-Theorem; and, in addition, are able to match some prescribed relaxation rates and/or transport coefficients that come from more complicated physics models or from experiment.  Many of these approaches have been extended to accommodate ellipsoid statistical (ES-BGK) models, polyatomic molecules, chemical reactions, velocity dependent collision frequencies or quantum gases; \textcolor{black}{see for example }\cite{Brull,Brull_2012,Pirner2,Todorova,Groppi,Pirner5,Pirner6,Bisi,Bisi2,Pirner9,velocity,Bisi_2018}.  Concerning the literature on multi-species Fokker Planck models, there are less results than in the BGK case, but the interest in multi-species Fokker-Planck models has been increased more and more recently. Models for gas mixtures can be found in \cite{7, Hu, Gorji, Hepp,Agrawal_Singh_Ansumali_2020}. 
The diffusion limit of a kinetic Fokker-Planck system for charged particles towards the Nernst-Planck equations was proved in \cite{15}. Furthermore, in \cite{7,11}, the limit of vanishing electron-ion mass ratios for non-homogeneous kinetic Fokker-Planck systems was investigated. In \cite{Hu}, the authors provide the first existence analysis of a multi-species Fokker-Planck system of the shape above. The works \cite{Gorji, Hepp} provide an extended Fokker-Planck model for hard-spheres gas mixtures to be able to also capture correct diffusion coefficients, mixture viscosity and heat conductivity coefficients in the hydrodynamic regime of the Navier-Stokes equations. 

The aim of this paper is to present a general multi-species Fokker-Planck model with free parameters 
to be able to fix exchange terms of momentum and energy.
Additionally, we want to study the conservation properties, positivity of all temperatures and the H-Theorem. The models \cite{7, Hu, Gorji, Hepp} can be shown to be a special case of this model presented here. This provides the possibility to create different exchange terms of momentum and energy in the macroscopic equations. 

The outline of this paper is as follows: in section \ref{secrev}, we briefly review a BGK model for gas mixtures from the literature to motivate our way to construct a Fokker-Planck model for gas mixtures in section  \ref{sec1}. We prove conservation properties of this model in section \ref{sec2}, positivity of all temperatures in section \ref{sec3} and an H-Theorem in section \ref{sec5}. In section \ref{sec4} we derive macroscopic equations and discuss several special cases in the literature \cite{7, Hu, Gorji, Hepp}. In section \ref{secN}, we consider the $N$-species case.
   
\section{Short review of a BGK model for gas mixtures from the literature}
\label{secrev}

In this section, we will briefly review an already existing BGK model from the literature to motivate how we construct the Fokker-Planck model for gas mixtures later on. For simplicity, we consider a gas mixture consisting of two species. \textcolor{black}{In the gas mixture case, one can find two types of BGK models. Just like the Boltzmann equation for gas mixtures contains a sum of collision terms on the right-hand side, one type of BGK models also have a sum of BGK-type interaction terms in the relaxation operator. The other type of models contains only one collision term on the right-hand side. In this paper, we are interested in the first type of models. } \begin{align*}
\partial_t f_1 + v \cdot \nabla_x f_1 = Q_{11}(f_1,f_1) + Q_{12}(f_1,f_2)=: Q_1^{BGK}(f_1,f_2)
\\
\partial_t f_2 + v \cdot \nabla_x f_2  = Q_{22}(f_2,f_2)+ Q_{21}(f_2,f_1)=:. Q_2^{BGK}(f_2,f_1),
\end{align*} 
Here, $f_i(x,v,t)> 0, ~i=1,2\textcolor{black}{,}$ is the distribution function of species $i$ where $x\in \mathbb{R}^d$ and $v\in \mathbb{R}^d$ are the phase space variables in dimension $d\geq 1$ and $t\geq 0$ the time. The collision operator on the right-hand side consists of a term $Q_{ii}$ describing the interactions of particles of the species \textcolor{black}{i} with  itself and a sum of collision operators $Q_{ij}, i\neq j,$ describing the interactions of particles of the  species $i$ with particles of species $j$. 
The collision operators are of the form
\begin{align} \begin{split} \label{BGK}
Q^{BGK}_1(f_1,f_2) = \textcolor{black}{ \nu_{11} n_1 (M_1-f_1) }+  \nu_{12} n_1 (M_{12} - f_1) \\
Q^{BGK}_2(f_2,f_1) = \textcolor{black}{ \nu_{22} n_2 (M_2-f_2) }+  \nu_{21} n_2 (M_{21} - f_2)
\end{split}
\end{align}
Here, $M_i$ and $M_{ij}$ denote locally Maxwell distribution functions of the form
\begin{align} 
&M_{i}(x,v,t) = \frac{n_i}{\sqrt{2 \pi \frac{T_i}{m_i}}^d} \exp({- \frac{|v-u_i|^2}{2 \frac{T_i}{m_i}}}), ~i=1,2
\\
&M_{ij}(x,v,t) = \frac{n_{ij}}{\sqrt{2 \pi \frac{T_{ij}}{m_j}}^d} \exp({- \frac{|v-u_{ij}|^2}{2 \frac{T_{ij}}{m_i}}}), ~i \neq j, ~i,j=1,2
\label{BGKmix}
\end{align}
\textcolor{black}{In equation \eqref{BGK},} $\nu_{ij} n_i$ denote the collision frequencies of species $i$ with species $j$. The parameters $\nu_{ij}$ are assumed to be positive and only depend on $x$ and $t$.  The parameters $n_i,n_{ij},u_i,u_{ij},T_i,T_{ij}$ are determined such that we have the following conservation properties:
conservation of mass, momentum and energy of the individual species in interaction with the species itself:
\begin{enumerate} 
\item $\int_{\mathbb{R}^d} Q_{ii}^{BGK}(f_i,f_i) dv = 0 \quad \text{for} \quad i = 1,2, $
\item  $\int_{\mathbb{R}^d}  m_i v Q^{BGK}_{ii}(f_i,f_i) dv= 0 \quad \text{ for} \quad i = 1,2 $
\item  $\int_{\mathbb{R}^d} m_i |v|^2 Q^{BGK}_{ii}(f_i,f_i) dv= 0 \quad \text{ for} \quad  i = 1,2. $
\end{enumerate}
Conservation of total mass, momentum and energy 
\begin{enumerate}
\item $\int_{\mathbb{R}^d} Q^{BGK}_{ij}(f_i,f_j) dv = 0 \quad \text{for} \quad i = 1,2,$  
\item $ \int_{\mathbb{R}^d}( m_1 v Q^{BGK}_{12}(f_1,f_2)+ m_2 v Q^{BGK}_{21}(f_2,f_1)) dv = 0, $
\item $ \int_{\mathbb{R}^d} (m_1 |v|^2 Q^{BGK}_{12}(f_1,f_2)+m_2 |v|^2 Q^{BGK}_{21}(f_2,f_1) )dv = 0. $
\end{enumerate}
\label{cons_prop}
for $i,j=\textcolor{black}{1,2, i\neq j}$. For this,  we relate the distribution functions to  macroscopic quantities by mean-values of $f_i$
\begin{align}
\begin{split}
\int_{\mathbb{R}^d} f_i(v) \begin{pmatrix}
1 \\ v \\ m_i |v-u_i|^2 \\ 
\end{pmatrix} 
dv =: \begin{pmatrix}
n_i \\ n_i u_i \\ d n_i T_i 
\end{pmatrix},
\end{split}
\label{macrosqu}
\end{align}
where $n_i$ is the number density, $u_i$ the mean velocity and $T_i$ the temperature which is related to the pressure $p_i$ by $p_i=n_i T_i$. Note that in this paper we shall write $T_i$ instead of $k_B T_i$, where $k_B$ is Boltzmann's constant. 
A general BGK model for \textcolor{black}{ two species} which contains most of the BGK models for gas mixtures in the literature is provided in \cite{Pirner}. We will introduce this model briefly in the following. If we assume   \begin{align} n_{12}=n_1 \quad \text{and} \quad n_{21}=n_2
\label{density} 
\end{align}
in \eqref{BGKmix}, we have conservation of the number of particles, see Theorem 2.1 in \cite{Pirner}.
If we further assume that $u_{12}$ is a linear combination of $u_1$ and $u_2$
 \begin{align}
u_{12}= \delta u_1 + (1- \delta) u_2, \quad \delta \in \mathbb{R},
\label{convexvel}
\end{align} then we have conservation of total momentum
provided that
\begin{align}
u_{21}=u_2 - \frac{m_1}{m_2} \varepsilon (1- \delta ) (u_2 - u_1),
\label{veloc}
\end{align}
see Theorem 2.2 in \cite{Pirner}. \textcolor{black}{This is again a linear combination of $u_1$ and $u_2$ with different coefficients in front of $u_1$ and $u_2$. If we see the parameter $\delta$  as a function of the masses $m_1$ and $m_2$, one can see an interpretation of this repartition. For more details, see remark 2.3 in \cite{Pirner}.}

If we further assume that $T_{12}$ is of the following form
\begin{align}
\begin{split}
T_{12} &=  \alpha T_1 + ( 1 - \alpha) T_2 + \gamma |u_1 - u_2 | ^2,  \quad 0 \leq \alpha \leq 1, \gamma \geq 0 ,
\label{contemp}
\end{split}
\end{align}
then we have conservation of total energy
provided that
\begin{align}
\begin{split}
T_{21} =\left[ \frac{1}{d} \varepsilon m_1 (1- \delta) \left( \frac{m_1}{m_2} \varepsilon ( \delta - 1) + \delta +1 \right) - \varepsilon \gamma \right] |u_1 - u_2|^2 \\+ \varepsilon ( 1 - \alpha ) T_1 + ( 1- \varepsilon ( 1 - \alpha)) T_2,
\label{temp}
\end{split}
\end{align}
see Theorem 2.3 in \cite{Pirner}.
In order to ensure the positivity of all temperatures, the parameters $\delta$ and $\gamma$ are restricted to
 \begin{align}
0 \leq \gamma  \leq \frac{m_1}{d} (1-\delta) \left[(1 + \frac{m_1}{m_2} \varepsilon ) \delta + 1 - \frac{m_1}{m_2} \varepsilon \right],
 \label{gamma}
 \end{align}
and
\begin{align}
 \frac{ \frac{m_1}{m_2}\varepsilon - 1}{1+\frac{m_1}{m_2}\varepsilon} \leq  \delta \leq 1,
\label{gammapos}
\end{align}
see Theorem 2.5 in \cite{Pirner}.

 Moreover, it can be shown that this model satisfies an H-Theorem, see Theorem 2.4 in \cite{Pirner}, meaning that we have the following inequality 
$$\sum_{\textcolor{black}{i,j=1,2}} \int_{\mathbb{R}^d} Q^{BGK}_{ij}(f_i, f_j) \log f_i dv \leq 0$$
with equality if and only if $f_i,f_j$ are Maxwell distributions with the same mean velocity and temperature. 

In the following, we will briefly motivate the meaning and possible choices of the free parameters $\alpha, \delta, \gamma$, for more details see \cite{Sandra}. One possibility is that one can choose the parameters such that one can generate special cases in the literature \textcolor{black}{\cite{gross_krook1956,hamel1965,Greene,Garzo1989,Sofonea2001,Pirner,haack,Bobylev}. }For instance if one chooses  $\varepsilon=1$, $ \delta= \frac{m_1}{m_1+m_2}$, $\alpha=\frac{m_1^2+m_2^2}{(m_1+m_2)^2}$ and $\gamma=\frac{m_1 m_2}{(m_1+m_2)^2} \frac{m_2}{d}$, one obtains the model by Hamel in \cite{hamel1965}. In \cite{Groppi} such relaxation parameters are used to fix in the continuum limit Fick`s law for diffusion velocities and Newton's law for viscous stress in the relevant set of Navier-Stokes equations.

Another possibility  is to choose the parameters in a way such that the macroscopic exchange terms of momentum and energy can be matched in a certain way for example that they coincide with the ones for the Boltzmann equation. For this, we first present the macroscopic equations with exchange terms of the BGK model \eqref{BGK}.
If we multiply the BGK model for gas mixtures by $1, m_j v, m_j \frac{|v|^2}{2}$ and integrate with respect to $v$, we obtain the following macroscopic conservation laws
\begin{multline*}
\\
\partial_t n_1 + \nabla_x \cdot (n_1 u_1)=0, 
\\
\partial_t n_2 + \nabla_x \cdot (n_2 u_2)=0,
\\
 \partial_t(m_1 n_1 u_1)+\nabla_x \cdot \int_{\mathbb{R}^d} m_1 v \otimes v f_1(v) dv + \nabla_x \cdot (m_1 n_1 u_1 \otimes u_1  )  =  f_{m_{1,2}},
\\
 \partial_t(m_2 n_2 u_2)+\nabla_x \cdot \mathbb{P}_2 + \nabla_x \cdot (m_2 n_2 u_2 \otimes u_2  )  = 
f_{m_{2,1}},
\\
\partial_t \left(\frac{m_1}{2} n_1 |u_1|^2 + \frac{3}{2} n_1 T_1 \right) + \nabla_x \cdot \int_{\mathbb{R}^d} m_1 |v|^2 v f_1(v) dv  =  F_{E_{1,2}},
\\
\partial_t \left(\frac{m_2}{2} n_2 |u_2|^2 + \frac{3}{2} n_2 T_2 \right) + \nabla_x \cdot \int_{\mathbb{R}^d} m_2 |v|^2 v f_2(v) dv  = F_{E_{2,1}},
\\
\end{multline*}
with exchange terms $f_{m_{i,j}}$ and $F_{E_{i,j}}$ given by
\begin{align*}
f_{m_{1,2}}&= - f_{m_{2,1}} = m_1 \nu_{12} n_1 n_2 (1 - \delta) (u_2 - u_1), \\
F_{m_{1,2}}&= - F_{m_{2,1}} \\& = \left[\nu_{12} \frac{1}{2} n_1 n_2 m_1 (\delta -1) (u_1 + u_2 + \delta(u_1-u_2)) + \frac{1}{2} \nu_{12} n_1 n_2 \gamma (u_1 - u_2) \right] \cdot (u_1-u_2)\\ &+ \frac{d}{2} \varepsilon \nu_{21} n_1 n_2 (1-\alpha) (T_2-T_1).
\end{align*}

Here, one can observe a physical meaning of $\alpha$ and $\delta$. We see that $\alpha$ and $\delta$ show up in the exchange terms of momentum and energy as parameters in front of the relaxation of $u_1$ towards $u_2$ and $T_1$ towards $T_2$. So they determine, together with the collision frequencies, the speed of relaxation of the mean velocities and the temperatures to a common value. 

Here now, as it is done in section 4.1 in \cite{haack} or section 4 in \cite{Sandra}, \textcolor{black}{one can} compare the relaxation rates in the space-homogeneous case to the relaxation rates for the space-homogeneous Boltzmann equation. In \cite{haack}, they find values for $\nu_{kj}$ such that either the relaxation rate for the mean velocities or the relaxation for the temperatures coincides with the corresponding rate of the Boltzmann equation. But using the free parameters $\alpha$, $\delta$ and $\gamma$ one is able to match both of the relaxation rates at the same time. 

 \section{General multi-species Fokker-Planck model}
\label{sec1}
In this section, we present a general multi-species Fokker-Planck model and study the conservation properties, an entropy inequality, the expected shape in equilibrium (Maxwell distribution with common mean velocity and temperature) and the positivity of all temperatures. For simplicity, we present this model for two-species, but everything can be extended to a general number of $N$ species, since we made the assumption of only considering binary interactions, see section \ref{secN}. So in this section, we consider the following system of Fokker-Planck equations
{\small
\begin{align}
\partial_t f_1 + v \cdot \nabla_x f_1 =  c_{11}n_1 \text{ div} (\nabla_v (\frac{T_1}{m_1} f_1) + (v-u_{1}) f_1 ) + c_{12}n_2 \text{div} (\nabla_v (\frac{T_{12}}{m_1} f_1) + (v-u_{12}) f_1 ) \\
\partial_t f_1 + v \cdot \nabla_x f_1 =  c_{22} n_2 \text{ div} (\nabla_v (\frac{T_2}{m_2} f_2) + (v-u_{2}) f_2 ) + c_{21}  n_1\text{div} (\nabla_v (\frac{T_{21}}{m_2} f_2 )+  (v-u_{21}) f_2 )
 \label{FP}\end{align}}
The quantity $c_{ij}$ is a friction constant, see \cite{Toscani} for a motivation of the one species case.
To be flexible in choosing the relationship between the constants $c_{12}, c_{21}$, we now assume the relationship 
\begin{align}
c_{12}  = \varepsilon ~ c_{21}, \varepsilon \leq 1 \text{ and } \varepsilon \frac{m_1}{m_2} \leq 1
\label{coll}
\end{align}
Note, that the assumption on $\varepsilon$ covers the two common cases in the literature for $\varepsilon$ which are $\varepsilon=\frac{m_2}{m_1}$ and $\varepsilon =1$ if the notation of $1$ and $2$ is chosen in a suitable way.
\subsection{Conservation properties}
\label{sec2}
This section shows how the macroscopic quantities $u_{ij}, T_{ij}$ in the interspecies Maxwell distributions have to be chosen in order to ensure the macroscopic conservation properties. We note that the mass is automatically conserved. 

\textcolor{black}{We start with the operator $Q^{FP}_{kk}, ~ k=1,2$ which describes the interactions of a species with itself. This operator satisfies the following conservation properties. For the proof see the one species case \cite{Gorji}.
\begin{theorem}[Conservation properties of the operator for intra species interactions]
\label{constraints_intra}
We have conservation of mass, momentum and energy 
$$
\int_{\mathbb{R}^d} m_k \begin{pmatrix}1, v, |v|^2 \end{pmatrix}^T Q^{FP}_{kk}(f_k,f_k)dv  = 0, ~k=1,2
$$
\end{theorem}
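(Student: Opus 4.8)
The plan is to exploit the pure divergence structure of the operator. Since $T_k/m_k$ does not depend on $v$, I would first write
\[
Q^{FP}_{kk}(f_k,f_k) = c_{kk} n_k \, \mathrm{div}_v\, G_k, \qquad G_k := \frac{T_k}{m_k}\nabla_v f_k + (v-u_k) f_k .
\]
Each of the three conservation statements then amounts to integrating a test weight $\psi\in\{1,\,v,\,|v|^2\}$ against a divergence in $v$. The whole proof reduces to integrating by parts in $v$, discarding the boundary terms at infinity (under the standing assumption that $f_k$ and its first velocity derivatives decay fast enough in $v$), and invoking the elementary moment identities contained in the definition \eqref{macrosqu}.

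For mass ($\psi=1$) the claim is immediate: $\int_{\mathbb{R}^d} \mathrm{div}_v G_k\, dv = 0$ by the divergence theorem. For momentum ($\psi=m_k v$) I would integrate by parts once, using $\int v_i\,\partial_{v_j}(G_k)_j\, dv = -\int (G_k)_i\, dv$, so that the momentum integral equals $-m_k c_{kk} n_k \int_{\mathbb{R}^d} G_k\, dv$. Here the diffusion part $\int \nabla_v f_k\, dv$ again vanishes as a boundary term, while the drift part $\int (v-u_k) f_k\, dv = n_k u_k - u_k\, n_k = 0$ directly by the definition of $u_k$ in \eqref{macrosqu}.

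For energy ($\psi=m_k|v|^2$) integration by parts produces the factor $\partial_{v_j}|v|^2 = 2v_j$, reducing the integral to $-2 m_k c_{kk} n_k \int_{\mathbb{R}^d} v\cdot G_k\, dv$. The diffusion contribution is $\tfrac{T_k}{m_k}\int v\cdot\nabla_v f_k\, dv = -d\,\tfrac{T_k}{m_k} n_k$ after one further integration by parts, and the drift contribution $\int (|v|^2 - v\cdot u_k) f_k\, dv$ equals $\tfrac{d\, n_k T_k}{m_k}$ once I expand $|v|^2 = |v-u_k|^2 + 2 u_k\cdot(v-u_k) + |u_k|^2$, use $\int (v-u_k)f_k\,dv=0$, and invoke the temperature normalization $\int m_k |v-u_k|^2 f_k\, dv = d\, n_k T_k$ from \eqref{macrosqu}. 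These two terms cancel exactly.

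The only step that is more than bookkeeping is this last cancellation in the energy identity: it works precisely because the diffusion coefficient is $T_k/m_k$, i.e. because the operator is designed so that the local Maxwellian $M_k$ at temperature $T_k$ and velocity $u_k$ is its equilibrium. Thus the real content is checking that the drift–diffusion balance is tuned to the very definition of $T_k$, and this is where I would be most careful. I would also make explicit the decay hypotheses on $f_k$ needed to drop every boundary term, since all three identities rely on them.
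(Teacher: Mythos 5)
Your proof is correct and is exactly the computation the paper itself omits (it defers to the one-species case in the cited reference): integration by parts against the divergence form of $Q^{FP}_{kk}$, discarding boundary terms, and using the moment identities \eqref{macrosqu}, with the energy cancellation hinging on the diffusion coefficient being $T_k/m_k$. Your explicit identification of that drift--diffusion balance as the one nontrivial point, and your remark that decay hypotheses on $f_k$ are silently assumed, are both accurate.
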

In the case of the mixture interaction terms, we can prove
\begin{theorem}[Conservation of total momentum]
Assume the condition \eqref{coll} for the collision frequencies and that $u_{12}$ is a linear combination of $u_1$ and $u_2$
 \begin{align}
u_{12}= \delta u_1 +(1- \delta) u_2, \quad \delta \in \mathbb{R}.
\label{convexvel}
\end{align} Then we have conservation of total momentum
$$
\int_{\mathbb{R}^d} m_1 v Q^{FP}_{12}(f_1,f_2)dv +
\int_{\mathbb{R}^d} m_2 v Q^{FP}_{21}(f_2,f_1) dv = 0, 
$$
provided that
\begin{align}
u_{21}=u_2 -(1- \delta )\varepsilon \frac{m_1}{m_2} (u_2 - u_1).
\label{veloc}
\end{align}
\label{theomom}
\end{theorem}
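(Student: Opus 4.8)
The plan is to verify the identity by computing the two velocity-moment integrals directly and checking that the prescribed linear forms of $u_{12}$, $u_{21}$, together with the relation \eqref{coll}, force their sum to vanish. The crucial structural observation is that each mixture operator is in conservative (divergence) form, $Q^{FP}_{1 2}(f_1,f_2) = c_{12} n_2\,\text{div}_v J_{12}$ with flux $J_{12} = \nabla_v\!\big(\tfrac{T_{12}}{m_1} f_1\big) + (v-u_{12}) f_1$, and analogously for $Q^{FP}_{21}$. Hence the only tool needed is integration by parts in the velocity variable, and the whole computation collapses to the first moment of the flux.

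First I would treat a single term. Writing $\int_{\mathbb{R}^d} m_1 v\, Q^{FP}_{12}\, dv = c_{12} n_2 m_1 \int_{\mathbb{R}^d} v\,\text{div}_v J_{12}\, dv$ and integrating by parts component-wise, the identity $\int_{\mathbb{R}^d} v_i \partial_{v_\ell}(J_{12})_\ell\, dv = -\int_{\mathbb{R}^d} \delta_{i\ell}(J_{12})_\ell\, dv = -\int_{\mathbb{R}^d}(J_{12})_i\, dv$ reduces the problem to $-\int_{\mathbb{R}^d} J_{12}\, dv$, the boundary terms vanishing by the assumed fast decay of $f_1$ and its velocity gradient. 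The gradient part $\int_{\mathbb{R}^d}\nabla_v\!\big(\tfrac{T_{12}}{m_1} f_1\big)\, dv$ is itself an exact velocity-derivative and integrates to zero, while the drift part gives $\int_{\mathbb{R}^d}(v-u_{12}) f_1\, dv = n_1 u_1 - u_{12} n_1 = n_1(u_1-u_{12})$ by the moment definitions \eqref{macrosqu}. This yields $\int_{\mathbb{R}^d} m_1 v\, Q^{FP}_{12}\, dv = c_{12} n_1 n_2 m_1 (u_{12}-u_1)$, and the identical computation for the other species gives $\int_{\mathbb{R}^d} m_2 v\, Q^{FP}_{21}\, dv = c_{21} n_1 n_2 m_2 (u_{21}-u_2)$.

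It then remains to insert the ansätze. Substituting \eqref{convexvel} gives $u_{12}-u_1=(1-\delta)(u_2-u_1)$, so the first contribution is $c_{12} n_1 n_2 m_1 (1-\delta)(u_2-u_1)$; substituting \eqref{veloc} gives $u_{21}-u_2=-(1-\delta)\varepsilon\tfrac{m_1}{m_2}(u_2-u_1)$, so the second is $-c_{21}\varepsilon n_1 n_2 m_1 (1-\delta)(u_2-u_1)$. Using $c_{12}=\varepsilon\, c_{21}$ from \eqref{coll}, the two terms are negatives of one another and the sum vanishes.

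I do not anticipate any genuine analytic obstacle: once the conservative structure is exploited, the argument is a bounded-moment integration by parts followed by linear algebra. The only real content is that the coefficient in \eqref{veloc} is exactly the one that cancels the species-$1$ contribution, so the cleanest presentation is to leave $u_{21}$ undetermined, impose that the two moments sum to zero, and read off \eqref{veloc} as the unique choice compatible with \eqref{coll} and the linearity assumption. The one technical point to keep honest is the justification of the integration by parts — decay of $f_1,f_2$ and of $\nabla_v f_1,\nabla_v f_2$ at infinity — which is implicit in the admissible class of distribution functions.
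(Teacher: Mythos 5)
Your proposal is correct and follows essentially the same route as the paper: integrate by parts to reduce the first moment of each divergence-form operator to $-\int J_{ij}\,dv$, observe that the gradient part vanishes and the drift part yields $c_{12} m_1 n_1 n_2 (u_{12}-u_1)$ and $c_{21} m_2 n_1 n_2 (u_{21}-u_2)$, then substitute \eqref{convexvel}, \eqref{veloc} and \eqref{coll} to obtain cancellation. Your remarks on reading off $u_{21}$ as the unique compatible choice and on justifying the boundary terms are consistent with (and slightly more explicit than) the paper's presentation.
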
}
\begin{proof}
The flux of momentum of species $1$ is given by
\begin{align}
\begin{split}
f_{m_{1,2}}:&=  c_{12} m_1 n_2 \int_{\mathbb{R}^d} v \text{ div} (\nabla_v (\frac{T_{12}}{m_1} f_1) + (v-u_{12}) f_1 ) dv\\&= -m_1 c_{12} n_2 \int_{\mathbb{R}^d} (\nabla_v (\frac{T_{12}}{m_1} f_1) + (v-u_{12}) f_1) dv \\&= m_1 c_{12} n_1 n_2 (u_{12} - u_1)
=
m_1 c_{12} n_1 n_2 (1 - \delta) (u_2 - u_1). 
\end{split}
\label{flux_mom_12}
\end{align}
The flux of momentum of species $2$ is given by
\begin{align}
f_{m_{2,1}}
=m_2 c_{21} n_2 n_1 (u_{21} - u_2).
\label{flu_mom_21}
\end{align}
In order to get conservation of momentum we therefore need
\begin{align*}
m_1 c_{12} n_1 n_2 (1 - \delta) (u_2 - u_1)+ m_2 c_{21} n_1 n_2 (u_{21} - u_2)= 0 ,
\end{align*}
which holds provided $u_{21}$ satisfies
 \eqref{veloc} 
\end{proof}
\begin{remark}
If we write $\tilde{\delta}= 1-\frac{m_1}{m_2}\varepsilon (1-\delta)$ we obtain a similar structure for $u_{21}$ as for $u_{12}$
$$ u_{21} = \tilde{\delta} u_2 + (1- \tilde{\delta}) u_1.$$
\end{remark}
\begin{theorem}[Conservation of total energy]
Assume conditions \eqref{convexvel} and \eqref{veloc} and assume that $T_{12}$ is of the following form
\begin{align}
\begin{split}
T_{12}=  \alpha T_1 + ( 1 - \alpha) T_2 + \gamma |u_1 - u_2 | ^2,  \quad 0 \leq \alpha \leq 1, \gamma \geq 0.
\label{contemp}
\end{split}
\end{align}
Then we have conservation of total energy
$$
\int_{\mathbb{R}^d} \frac{m_1}{2} |v|^2 Q^{FP}_{12}(f_1,f_2)dv +
\int_{\mathbb{R}^d} \frac{m_2}{2} |v|^2 Q^{FP}_{21}(f_2,f_1)dv = 0,
$$
\end{theorem}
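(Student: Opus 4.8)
The plan is to compute the two energy fluxes $F_{E_{1,2}}$ and $F_{E_{2,1}}$ separately by integration by parts, and then to require that their sum vanish; this requirement is what determines the remaining free parameter $T_{21}$. Throughout I would use that the boundary terms vanish (the $f_i$ and their gradients decay at infinity) together with the moment relations \eqref{macrosqu}, which give $\int_{\mathbb{R}^d} f_1\,dv = n_1$, $\int_{\mathbb{R}^d} v f_1\,dv = n_1 u_1$ and $\int_{\mathbb{R}^d}|v|^2 f_1\,dv = \tfrac{d\,n_1 T_1}{m_1} + n_1|u_1|^2$; the last identity follows from the definition of $T_1$ and carries the hidden kinetic-energy term $n_1|u_1|^2$ that makes the bookkeeping subtle.

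First I would treat species $1$. Writing $A := \nabla_v(\tfrac{T_{12}}{m_1}f_1) + (v-u_{12})f_1$ and using $\int_{\mathbb{R}^d}|v|^2\,\mathrm{div}_v A\,dv = -\int_{\mathbb{R}^d} 2v\cdot A\,dv$ together with the fact that $T_{12}$ is independent of $v$, one integration by parts in each term gives $\int v\cdot\nabla_v(\tfrac{T_{12}}{m_1}f_1)\,dv = -\tfrac{d n_1 T_{12}}{m_1}$ and $\int v\cdot(v-u_{12})f_1\,dv = \tfrac{d n_1 T_1}{m_1} + n_1 u_1\cdot(u_1-u_{12})$, so that
$$F_{E_{1,2}} := \int_{\mathbb{R}^d}\tfrac{m_1}{2}|v|^2 Q^{FP}_{12}(f_1,f_2)\,dv = c_{12} n_1 n_2\bigl[d(T_{12}-T_1) + m_1 u_1\cdot(u_{12}-u_1)\bigr].$$
Exchanging indices yields $F_{E_{2,1}} = c_{21} n_1 n_2[d(T_{21}-T_2) + m_2 u_2\cdot(u_{21}-u_2)]$. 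I would stress that the velocity contribution $u_i\cdot(u_{ij}-u_i)$ here is structurally different from the BGK flux, which instead produces $|u_{ij}|^2-|u_i|^2$ (the two differ by $|u_{ij}-u_i|^2$), so the $T_{21}$ forced by the Fokker-Planck flux will not coincide with \eqref{temp}.

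Finally I would impose $F_{E_{1,2}} + F_{E_{2,1}} = 0$, divide by $c_{21} n_1 n_2$, and use $c_{12} = \varepsilon c_{21}$ from \eqref{coll} to obtain $\varepsilon[d(T_{12}-T_1) + m_1 u_1\cdot(u_{12}-u_1)] + d(T_{21}-T_2) + m_2 u_2\cdot(u_{21}-u_2) = 0$. Substituting $u_{12}-u_1 = (1-\delta)(u_2-u_1)$ from \eqref{convexvel}, $u_{21}-u_2 = -(1-\delta)\varepsilon\tfrac{m_1}{m_2}(u_2-u_1)$ from \eqref{veloc}, and $T_{12}-T_1 = (1-\alpha)(T_2-T_1) + \gamma|u_1-u_2|^2$ from \eqref{contemp}, the two velocity cross terms collapse to $\varepsilon m_1(1-\delta)(u_1-u_2)\cdot(u_2-u_1) = -\varepsilon m_1(1-\delta)|u_1-u_2|^2$. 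Solving for $T_{21}$ then gives
$$T_{21} = \varepsilon(1-\alpha)T_1 + \bigl(1-\varepsilon(1-\alpha)\bigr)T_2 + \left[\frac{\varepsilon m_1(1-\delta)}{d} - \varepsilon\gamma\right]|u_1-u_2|^2,$$
which is precisely the condition under which total energy is conserved.

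The main obstacle I anticipate is bookkeeping rather than conceptual difficulty: one must carry the quadratic-in-velocity terms correctly through the integrations by parts, remember the $n_i|u_i|^2$ contribution buried inside $\int|v|^2 f_i\,dv$, and verify that after inserting the momentum relation \eqref{veloc} the mixed velocity terms collapse to a single clean multiple of $|u_1-u_2|^2$, which is then absorbed, together with the $\gamma$ term, into $T_{21}$. The one genuinely delicate point is recognising that the friction (differential) structure of the Fokker-Planck operator weights the velocities differently from the BGK relaxation, so the correct $T_{21}$ must be rederived here and cannot simply be imported from the BGK model.
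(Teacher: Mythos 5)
Your proposal is correct and follows essentially the same route as the paper: integrate by parts against $\tfrac{m_i}{2}|v|^2$, reduce each flux to $c_{ij}n_in_j[d(T_{ij}-T_i)+m_iu_i\cdot(u_{ij}-u_i)]$, and solve $F_{E_{1,2}}+F_{E_{2,1}}=0$ for $T_{21}$, arriving at exactly the paper's condition \eqref{temp}. Your side remark that the Fokker--Planck flux carries $u_i\cdot(u_{ij}-u_i)$ rather than the BGK's $|u_{ij}|^2-|u_i|^2$, so that \eqref{temp} here differs from its BGK counterpart, is also accurate and consistent with the two formulas in the paper.
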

provided that
\begin{align}
\begin{split}
T_{21}=\left[ \frac{1}{d} \varepsilon m_1 (1- \delta)  - \varepsilon \gamma \right] |u_1 - u_2|^2 + \varepsilon ( 1 - \alpha ) T_1 + ( 1- \varepsilon ( 1 - \alpha)) T_2.
\label{temp}
\end{split}
\end{align}
\label{theoen}
\begin{proof}
Using the energy flux of species $1$ 
\begin{align*}
F_{E_{1,2}}&:=c_{11} n_1 \int_{\mathbb{R}^d} \frac{m_1}{2} |v|^2 \text{ div} (\nabla_v(\frac{T_1}{m_1} f_1) + (v-u_1)) f_1  dv 
\\ &+ c_{12} n_2 \int_{\mathbb{R}^d} \frac{m_1}{2} |v|^2  \text { div} (\nabla_v(\frac{T_{12}}{m_1} f_1) + (v-u_{12})) f_1 dv 
\\ &=- c_{12} n_2 m_1 \int_{\mathbb{R}^d} v \cdot (\nabla_v(\frac{T_{21}}{m_1} f_1) + (v- u_{12})) f_1 dv 
 \\ &= c_{12} n_2 m_1d \int_{\mathbb{R}^d}  \frac{T_{12}}{m_1} f_1 dv - c_{12} n_2 m_1 \int_{\mathbb{R}^d} v \cdot (v-u_{12}) f_1 dv \\ &= c_{12} n_2 d n_1 T_{12} - c_{12} n_2 m_1 (d n_1 \frac{T_1}{m_1} + n_1 |u_1|^2 - n_1 u_1 \cdot u_{12}) \\ &= c_{12} n_2 d n_1 T_{12} - c_{12} n_2  n_1 (d T_1 + m_1 u_1 \cdot (u_1-u_{12})) \\ &= c_{12} n_1 n_2  d (T_{12}-T_1) - c_{12} m_1 n_1 n_2 u_1 \cdot (u_1-u_{12}) \\ &= c_{12} n_1 n_2 (1 -\alpha) (T_2-T_1) - c_{12} m_1 n_2 n_1( (1- \delta) u_1 \cdot (u_1-u_2) + \gamma |u_1-u_2|^2) 
 \end{align*}
where we used \eqref{convexvel} and \eqref{contemp}. Analogously the energy flux of species $2$ towards $1$ is 
\begin{align*}
F_{E_{2,1}}= c_{21} m_2 n_1 n_2 ( u_2 \cdot (u_{21}-u_2))+ d~ c_{21} n_1 n_2 ( T_{21} -T_2) \\ = c_{21} m_2 n_1 n_2 (1-\delta) \frac{m_1}{m_2} \varepsilon (u_2 \cdot (u_1-u_2))+ d ~c_{21} n_1 n_2 ( T_{21} -T_2)
\end{align*}
Here, we substituted $u_{21}$ with \eqref{veloc}. 
Adding these two terms, we see that the total energy is conserved  provided that $T_{21}$ is given by \eqref{temp}. \end{proof}
\begin{remark}
We have $0 \leq 1-\varepsilon (1 - \alpha) \leq 1$ and  $0 \leq \varepsilon (1- \alpha) \leq 1$, so that in \eqref{temp} the two terms with the temperatures are also a convex combination of $T_1$ and $T_2$.  
\end{remark}
\subsection{Positivity of the temperatures}
\label{sec3}
\begin{theorem}
Assume that $f_1(x,v,t), f_2(x,v,t) > 0$. Then all temperatures $T_1$, $T_2$, $T_{12}$ given by \eqref{contemp} and $T_{21}$ given by \eqref{temp} are positive provided that 
 \begin{align}
0 \leq \gamma \leq \frac{m_1}{d} (1-\delta) 
 \label{gamma}
 \end{align}
\end{theorem}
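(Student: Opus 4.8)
The plan is to treat the four temperatures in increasing order of difficulty, beginning with the single-species temperatures $T_1$ and $T_2$, which are the only ones defined directly through the distribution functions rather than through the algebraic relations \eqref{contemp} and \eqref{temp}. First I would read off from the definition \eqref{macrosqu} that $d\, n_k T_k = \int_{\mathbb{R}^d} m_k |v-u_k|^2 f_k \, dv$ for $k=1,2$. Since $f_k>0$ by hypothesis, the integrand $m_k |v-u_k|^2 f_k$ is non-negative and strictly positive on a set of positive measure, so the integral is strictly positive; together with $n_k>0$ this yields $T_k>0$. This step is essentially immediate and uses only the positivity assumption on $f_k$.

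Next I would dispose of $T_{12}$. From \eqref{contemp} we have $T_{12}=\alpha T_1+(1-\alpha)T_2+\gamma|u_1-u_2|^2$ with $0\le\alpha\le1$ and $\gamma\ge0$. The first two terms form a convex combination of the strictly positive quantities $T_1$ and $T_2$, hence are bounded below by $\min(T_1,T_2)>0$, while the third term is non-negative. Therefore $T_{12}>0$ already under the hypotheses of \eqref{contemp}, with no further restriction needed.

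The substantive case, and where the constraint \eqref{gamma} enters, is $T_{21}$. From \eqref{temp} I would split
$$T_{21}=\underbrace{\varepsilon\left[\frac{m_1}{d}(1-\delta)-\gamma\right]}_{(\ast)}|u_1-u_2|^2+\underbrace{\varepsilon(1-\alpha)T_1+\bigl(1-\varepsilon(1-\alpha)\bigr)T_2}_{(\ast\ast)}.$$
For the temperature part $(\ast\ast)$, the Remark following \eqref{temp} already records that $0\le\varepsilon(1-\alpha)\le1$ and $0\le1-\varepsilon(1-\alpha)\le1$, so $(\ast\ast)$ is a convex combination of the positive temperatures $T_1,T_2$ and is thus strictly positive. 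It then remains only to make the coefficient $(\ast)$ of $|u_1-u_2|^2$ non-negative. Since the friction constants, and hence $\varepsilon$ via \eqref{coll}, are positive, $(\ast)\ge0$ is equivalent to $\gamma\le\frac{m_1}{d}(1-\delta)$, which is exactly the upper bound in \eqref{gamma} (and this bound is consistent with $\gamma\ge0$ precisely when $\delta\le1$). With $(\ast)|u_1-u_2|^2\ge0$ and $(\ast\ast)>0$, I conclude $T_{21}\ge(\ast\ast)>0$.

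The hard part is not any delicate estimate but the bookkeeping of signs: correctly isolating the coefficient of $|u_1-u_2|^2$ in \eqref{temp}, recognizing that the surviving temperature terms reassemble into a convex combination, and checking that $\varepsilon>0$ so that clearing the common factor $\varepsilon$ does not reverse the inequality. Once these are in place, the single condition $\gamma\le\frac{m_1}{d}(1-\delta)$ suffices to guarantee positivity of the only temperature that can fail to be positive, namely $T_{21}$.
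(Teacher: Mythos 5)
Your proposal is correct and follows essentially the same route as the paper: $T_1,T_2$ positive as moments of positive $f_k$, $T_{12}$ positive as a convex combination plus a non-negative term, and for $T_{21}$ checking the signs of the coefficients of $T_1$, $T_2$ and $|u_1-u_2|^2$, with the last one yielding exactly the condition \eqref{gamma}. The only cosmetic difference is that you bundle the $T_1$, $T_2$ coefficients into a convex combination citing the preceding remark, whereas the paper verifies the sign of each coefficient directly (noting that $1-\varepsilon(1-\alpha)\ge 0$ follows from $\varepsilon\le 1$).
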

\begin{proof}
$T_1$ and $T_2$ are positive as integrals of positive functions. $T_{12}$ is positive because by construction it is a convex combination of $T_1$ and $T_2$. For $T_{21}$ we consider the coefficients in front of $|u_1-u_2|^2$, $T_1$ and $T_2$. The term in front of $T_1$ is positive by definition. The positivity of the term in front of $T_2$ is equivalent to the condition $\alpha \geq 1- \frac{1}{\varepsilon}$, which is satisfied since $\varepsilon \leq1$, the positivity of the term in front of $|u_1-u_2|^2$ is equivalent to the condition \eqref{gamma}.
\end{proof}
\begin{remark}
According to the definition of $\gamma$, $\gamma$ is a non-negative number, so the right-hand side of the inequality in \eqref{gamma} must be non-negative. This condition is equivalent to 
\begin{align}
 \delta \leq 1 .
\label{gammapos}
\end{align}
\end{remark}
\subsection{Macroscopic equations and exchange terms of momentum and energy}
\label{sec4}
In this section, we deal with macroscopic equations, exchange terms of momentum and energy, and special cases in the literature.  With a specific choice of the parameters  we can generate special cases in the literature \cite{7, Hu, Gorji}. For instance, in \cite{7} the mean mixture velocities and temperatures are chosen to be
\begin{align*}
u_{12}= u_{21} = \frac{u_1+u_2}{2}; \quad T_{12}=T_{21} = \frac{m_2 T_1 +m_1 T_2}{m_1+m_2} + \frac{m_1 m_2}{m_1+m_2} \frac{1}{2d} |u_1-u_2|^2
\end{align*}
so we can generate this model by choosing 
\begin{align*}
\alpha= \frac{m_2}{m_1+m_2}, \quad \delta=\frac{1}{2}, \quad  \gamma= \frac{1}{2d} \frac{m_1 m_2}{m_1+m_2}
\end{align*}
With the choice of 
\begin{align*}
\alpha= \frac{c_{12} n_2}{c_{12} n_2 + c_{21} n_1},  \delta = \frac{c_{12} m_1 n_1}{c_{12} m_1 n_1 + c_{12} m_2 n_2}, \gamma = \frac{c_{12} m_1 n_1 c_{21} m_2 n_2}{d (c_{12} n_1 + c_{21} n_2)(c_{21} n_1 n_1 +c_{12} m_2 n_2)}
\end{align*}
we can generate $u_{12}, u_{21}, T_{12}, T_{21}$ as in \cite{Hu} given by
\begin{align*}
u_{12}&=u_{21}= \frac{c_{21} m_1 n_1 u_1 + c_{12} m_2 n_2 u_2}{c_{12} m_2 n_2 +c_{21} m_1 n_1}, \\ T_{12}&=T_{21}= \frac{c_{21} n_1 T_1 + c_{12}n_2 T_2}{c_{12} n_2 + c_{21} n_1} + \frac{c_{12} m_1 n_1 c_{21} m_2 n_2}{d (c_{12} n_1 + c_{21} n_2)(c_{21} n_1 n_1 +c_{12} m_2 n_2)} |u_1-u_2|^2
\end{align*}
Another possibility for example is to choose as mixture velocity the velocity of the other species as it is done for example in \cite{Gorji} with $\delta=0$ to have 
\begin{align*} 
u_{12}=u_2, u_{21}=u_1.
\end{align*}

Another way to see the influence of the parameters is in the macroscopic exchange terms of momentum and energy. For this, we first present the macroscopic equations with exchange terms of the Fokker-Planck model \eqref{FP}.
If we multiply the Fokker-Planck model for gas mixtures by $1, m_j v, m_j \frac{|v|^2}{2}$ and integrate with respect to $v$, we obtain the following macroscopic conservation laws
\begin{multline*}
\\
\partial_t n_1 + \nabla_x \cdot (n_1 u_1)=0, 
\\
\partial_t n_2 + \nabla_x \cdot (n_2 u_2)=0,
\\
 \partial_t(m_1 n_1 u_1)+\nabla_x \cdot \int_{\mathbb{R}^d} m_1 v \otimes v f_1(v) dv   =  f_{m_{1,2}},
\\
 \partial_t(m_2 n_2 u_2)+\nabla_x \cdot  \int_{\mathbb{R}^d} m_2 v \otimes v f_2(v) dv  = 
f_{m_{2,1}},
\\
\partial_t \left(\frac{m_1}{2} n_1 |u_1|^2 + \frac{3}{2} n_1 T_1 \right) + \nabla_x \cdot \int_{\mathbb{R}^d} m_1 |v|^2 v f_1(v) dv  =  F_{E_{1,2}},
\\
\partial_t \left(\frac{m_2}{2} n_2 |u_2|^2 + \frac{3}{2} n_2 T_2 \right) + \nabla_x \cdot \int_{\mathbb{R}^d} m_2 |v|^2 v f_2(v) dv  = F_{E_{2,1}},
\\
\end{multline*}
with exchange terms $f_{m_{i,j}}$ and $F_{E_{i,j}}$ given by
\begin{align}
\begin{split}
f_{m_{1,2}}&= - f_{m_{2,1}} = m_1 c_{12} n_1 n_2 (1 - \delta) (u_2 - u_1), \\
F_{E_{1,2}}&= - F_{E_{2,1}} \\& = c_{12}  n_1 n_2 m_1 (1-\delta) u_1  \cdot (u_2-u_1)  +  \gamma \frac{d}{m_1} |u_1-u_2|^2 \\ &+ d  c_{12} n_1 n_2 (1-\alpha) (T_2-T_1).\label{macrosequ1}
\end{split}
\end{align}

\subsection{H-theorem for mixtures}
\label{sec5}
In this section we will prove an H-Theorem for the model \eqref{FP}. For this, we make the following additional assumptions on the free parameters. We make the stronger assumptions of \eqref{contemp}, \eqref{gamma}, \eqref{gammapos}. 
\begin{align}
 \frac{\varepsilon}{1+\varepsilon} \leq \alpha \leq 1
, \quad 
\frac{\varepsilon}{1+\varepsilon} \leq \delta \leq 1
, \quad 
(1-\delta)^2 \frac{m_1}{d} \leq \gamma \leq (1-\delta) \frac{m_1}{d} \frac{\varepsilon}{1+\varepsilon}
\label{asspar}
\end{align}
Moreover, in order to simplify the notation we define the following quantities 
\begin{align}
\gamma_1:= (1-\delta)^2 \frac{m_1}{d}; \quad \gamma_2:= (1-\delta)^2 \frac{m_2}{d} \varepsilon^2 (\frac{m_1}{m_2})^2, \quad \tilde{\gamma}:= \frac{m_1}{d} \varepsilon (1-\delta) - \varepsilon \gamma
\label{defgam}
\end{align}
and the temperatures
\begin{align}
\bar{T}_{12} = \alpha T_1+ (1- \alpha) T_2; \quad \bar{T}_{21}= \varepsilon (1-\alpha) T_1 + (1-\varepsilon (1-\alpha)) T_2
\label{Tbar0}
\end{align}
We start with some lemmas which we will need later for the proof of the H-Theorem.
\begin{lemma}
Let $M_{12}, M_{21}$ \textcolor{black}{be} the two Maxwell distributions given by \eqref{BGKmix}. Then we have
\begin{align*}
&\frac{T_{12}}{m_1} c_{12} n_2 \int_{\mathbb{R}^d} \frac{M_{12}^2}{f_1} \left(\frac{\nabla_v f_1 M_{12}- \nabla_v M_{12} f_1}{M_{12}^2} \right)^2 dv \\&+ \frac{T_{21}}{m_2} c_{21} n_1 \int_{\mathbb{R}^d} \frac{M_{21}^2}{f_2} \left(\frac{\nabla_v f_2 M_{21}- \nabla_v M_{21} f_2}{M_{21}^2} \right)^2 dv \\ &= \frac{T_{12}}{m_1} c_{12} n_2 \int_{\mathbb{R}^d} \frac{|\nabla_v f_1|^2}{f_1} dv + \frac{T_{21}}{m_2} c_{21} n_1 \int_{\mathbb{R}^d} \frac{|\nabla_v f_2|^2}{f_2} dv \\&+ c_{12} n_2 n_1 d \frac{T_1 + \frac{m_1}{d} (1-\delta)^2 |u_1-u_2|^2}{T_{12}} +  c_{21} n_2 n_1 d \frac{T_2 + \frac{m_2}{d}\varepsilon^2 (\frac{m_1}{m_2})^2 (1-\delta)^2 |u_1-u_2|^2}{T_{21}}\\& - 2(1+ \varepsilon) c_{21} n_1 n_2 d
\end{align*}
\label{inequ}
\end{lemma}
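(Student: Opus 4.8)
The plan is to treat the claimed identity as a direct computation, handling each species separately and then summing, since the left-hand side is already a sum of two structurally identical integrals. The observation that makes the algebra manageable is that the expression inside the square is a total gradient: by the quotient rule,
$$\frac{\nabla_v f_1\, M_{12}-\nabla_v M_{12}\, f_1}{M_{12}^2}=\nabla_v\!\left(\frac{f_1}{M_{12}}\right),$$
so the first integrand equals $\frac{M_{12}^2}{f_1}\big|\nabla_v(f_1/M_{12})\big|^2$. Expanding the square and dividing through by $f_1 M_{12}^2$ gives the three pieces
$$\frac{|\nabla_v f_1|^2}{f_1}-2\,\frac{\nabla_v f_1\cdot\nabla_v M_{12}}{M_{12}}+f_1\,\frac{|\nabla_v M_{12}|^2}{M_{12}^2}.$$
The first piece already reproduces the $\int |\nabla_v f_1|^2/f_1\,dv$ term in the statement, so the task reduces to evaluating the last two integrals.

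First I would insert the explicit Gaussian form of $M_{12}$ from \eqref{BGKmix}, which gives the normalization-independent identity $\nabla_v M_{12}/M_{12}=-\frac{m_1}{T_{12}}(v-u_{12})$. The cross term then becomes $\frac{2m_1}{T_{12}}\int \nabla_v f_1\cdot(v-u_{12})\,dv$, and after one integration by parts (the boundary terms vanish by the rapid decay of $f_1$) this equals $-\frac{2m_1 d\,n_1}{T_{12}}$, using $\nabla_v\cdot(v-u_{12})=d$ and $\int f_1\,dv=n_1$. For the quadratic term I would compute $\int f_1|v-u_{12}|^2\,dv$ by decomposing $v-u_{12}=(v-u_1)+(1-\delta)(u_1-u_2)$, which follows from \eqref{convexvel}; the linear cross contribution drops since $\int f_1(v-u_1)\,dv=0$, leaving $\frac{d\,n_1 T_1}{m_1}+n_1(1-\delta)^2|u_1-u_2|^2$ by the moment definitions \eqref{macrosqu}. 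Multiplying by the prefactor $\frac{T_{12}}{m_1}c_{12}n_2$ then produces exactly the species-$1$ fraction on the right-hand side together with a leftover constant $-2c_{12}n_1n_2 d$.

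For the second species I would rerun the identical computation with $M_{21}$, $u_{21}$ and $T_{21}$ replacing $M_{12}$, $u_{12}$ and $T_{12}$. The only change is the velocity shift: \eqref{veloc} gives $u_2-u_{21}=(1-\delta)\varepsilon\frac{m_1}{m_2}(u_2-u_1)$, so the analogous quadratic moment equals $\frac{d\,n_2 T_2}{m_2}+n_2(1-\delta)^2\varepsilon^2(\frac{m_1}{m_2})^2|u_1-u_2|^2$, which reproduces the species-$2$ bracket, and the cross term leaves the constant $-2c_{21}n_1n_2 d$. Summing the two contributions, the only place the model hypotheses enter is the collapse of the two leftover constants: by \eqref{coll}, $c_{12}=\varepsilon\,c_{21}$, so $-2c_{12}n_1n_2 d-2c_{21}n_1n_2 d=-2(1+\varepsilon)c_{21}n_1n_2 d$, which is precisely the final term.

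I do not expect a genuine obstacle: the statement is an exact identity whose proof is essentially bookkeeping. The points demanding care are the vanishing of the boundary terms in the integration by parts, the correct cancellation of the linear cross terms in the moment expansions, and keeping exact track of the $m_i$, $T_{ij}$ and $c_{ij}$ factors so that the Gaussian moments assemble into the convex-combination form displayed in the lemma.
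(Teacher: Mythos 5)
Your proposal is correct and follows essentially the same route as the paper: expand the square into the $|\nabla_v f_1|^2/f_1$ piece, the quadratic Gaussian-moment piece, and the cross term; use $\nabla_v M_{12}/M_{12}=-\frac{m_1}{T_{12}}(v-u_{12})$, integration by parts for the cross term, the shift $v-u_{12}=(v-u_1)+(1-\delta)(u_1-u_2)$ (and its analogue via \eqref{veloc}) for the second moments, and \eqref{coll} to collapse the constants into $-2(1+\varepsilon)c_{21}n_1n_2 d$. The bookkeeping of the $m_i$, $T_{ij}$ and $c_{ij}$ factors matches the lemma exactly.
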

\begin{proof}
We can compute 
{\small
\begin{align*}
&\frac{T_{12}}{m_1} c_{12} n_2 \int_{\mathbb{R}^d} \frac{M_{12}^2}{f_1} \left(\frac{\nabla_v f_1 M_{12}- \nabla_v M_{12} f_1}{M_{12}^2} \right)^2 dv + \frac{T_{21}}{m_2} c_{21} n_1 \int_{\mathbb{R}^d} \frac{M_{21}^2}{f_2} \left(\frac{\nabla_v f_2 M_{21}- \nabla_v M_{21} f_2}{M_{21}^2} \right)^2 dv \\ &=  \frac{T_{12}}{m_1} c_{12} n_2 \int_{\mathbb{R}^d} \frac{|\nabla_v f_1|^2}{f_1} dv + \frac{T_{21}}{m_2} c_{21} n_1 \int_{\mathbb{R}^d} \frac{|\nabla_v f_2|^2}{f_2} dv \\&+ c_{12} n_2  \int_{\mathbb{R}^d} \frac{|v-u_{12}|^2}{T_{12}/m_1} f_1 dv  + c_{21}  n_1 \int_{\mathbb{R}^d} \frac{|v-u_{21}|^2}{T_{21}/m_1} f_2 dv \\ &+  2 c_{12} n_2 \int_{\mathbb{R}^d} \nabla_v f_1 \cdot (v-u_{12})dv + 2 c_{21} n_1 \int_{\mathbb{R}^d} \nabla_v f_2 \cdot (v-u_{21})dv 
\\ &= \frac{T_{12}}{m_1} c_{12} n_2 \int_{\mathbb{R}^d} \frac{|\nabla_v f_1|^2}{f_1} dv + \frac{T_{21}}{m_2} c_{21} n_1 \int_{\mathbb{R}^d} \frac{|\nabla_v f_2|^2}{f_2} dv \\&+ c_{12} n_2 n_1 d \frac{T_1 + \frac{m_1}{d}  |u_1-u_{12}|^2}{T_{12}}  + c_{21} n_2 n_1 d \frac{T_2 + \frac{m_2}{d} \varepsilon^2 (\frac{m_1}{m_2})^2 (1-\delta)^2 |u_2-u_{21}|^2}{T_{21}} \\ &- 2(1+ \varepsilon) c_{21} n_1 n_2 d
\\ &= \frac{T_{12}}{m_1} c_{12} n_2 \int_{\mathbb{R}^d} \frac{|\nabla_v f_1|^2}{f_1} dv + \frac{T_{21}}{m_2} c_{21} n_1 \int_{\mathbb{R}^d} \frac{|\nabla_v f_2|^2}{f_2} dv \\&+ c_{12} n_2 n_1 d \frac{T_1 + \frac{m_1}{d} (1-\delta)^2 |u_1-u_2|^2}{T_{12}} + + c_{21} n_2 n_1 d \frac{T_2 + \frac{m_2}{d} \varepsilon^2 (\frac{m_1}{m_2})^2 (1-\delta)^2 |u_1-u_2|^2}{T_{21}} \\&- 2(1+ \varepsilon) c_{21} n_1 n_2 d
\end{align*}}
Here, we used $\nabla_v M_{12} = - \frac{v-u_{12}}{T_{12}/m_1} M_{12}$ and the relationship \eqref{convexvel} and \eqref{veloc} for $u_{12}, u_{21}$.
\end{proof}
\begin{lemma}
We assume the estimate for $\alpha$ in \eqref{asspar}. Then we have
\begin{align}
\varepsilon T_1 \bar{T}_{21} + \bar{T}_{12} T_2 \leq (1+\varepsilon) \bar{T}_{12} \bar{T}_{21} 
\label{Tbar}
\end{align}
\label{inequ2}
\end{lemma}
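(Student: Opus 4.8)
The plan is to move everything to one side and recognize the claim as the nonnegativity of a quadratic form in the two temperatures. Concretely, I would set
\[
D := (1+\varepsilon)\bar{T}_{12}\bar{T}_{21} - \varepsilon T_1 \bar{T}_{21} - \bar{T}_{12} T_2
\]
and show $D \ge 0$. Since both $\bar{T}_{12} = \alpha T_1 + (1-\alpha)T_2$ and $\bar{T}_{21} = \varepsilon(1-\alpha)T_1 + (1 - \varepsilon(1-\alpha))T_2$ from \eqref{Tbar0} are affine in $(T_1,T_2)$ with no constant term, $D$ is a homogeneous quadratic form $D = a\,T_1^2 + b\,T_1 T_2 + c\,T_2^2$, so the whole argument reduces to computing the three coefficients and checking the sign of the form.

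To keep the algebra manageable I would substitute $\beta := 1 - \alpha$, under which $\bar{T}_{12} = (1-\beta)T_1 + \beta T_2$ and $\bar{T}_{21} = \varepsilon\beta\,T_1 + (1 - \varepsilon\beta)T_2$, so that the hypothesis $\frac{\varepsilon}{1+\varepsilon} \le \alpha \le 1$ turns into the clean two-sided bound $0 \le \beta$ together with $\beta(1+\varepsilon) \le 1$. Expanding the product $\bar{T}_{12}\bar{T}_{21}$ and the two remaining bilinear terms and collecting powers of $T_1,T_2$, the key phenomenon I expect is that the coefficients of $T_1^2$ and of $T_2^2$ come out equal, both being
\[
a = \varepsilon\beta\bigl(1 - \beta(1+\varepsilon)\bigr),
\]
while the coefficient of $T_1 T_2$ collapses to exactly $-2a$. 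Granting this, the form is a perfect square,
\[
D = a\,(T_1 - T_2)^2,
\]
and the lemma follows the moment one checks $a \ge 0$.

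Finally I would verify the sign of $a$ by rewriting it back in terms of $\alpha$, namely $a = \varepsilon(1-\alpha)\bigl(\alpha(1+\varepsilon) - \varepsilon\bigr)$. The upper bound $\alpha \le 1$ makes $1-\alpha \ge 0$, while the lower bound $\alpha \ge \frac{\varepsilon}{1+\varepsilon}$ is exactly the statement $\alpha(1+\varepsilon) - \varepsilon \ge 0$; hence $a \ge 0$ and therefore $D \ge 0$, which is \eqref{Tbar}.

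The step I expect to be the main obstacle is purely the bookkeeping in the middle paragraph: one must expand a product of two binomials together with two further bilinear terms and trust that the resulting quadratic coefficients assemble into a perfect square. The nontrivial algebraic coincidence --- that the $T_1^2$ and $T_2^2$ coefficients agree and that the cross term is their correct negative double --- is what makes the identity $D = a(T_1-T_2)^2$ hold, and it is worth carrying out carefully rather than claiming it by inspection, since a single sign slip there would be invisible in the final inequality.
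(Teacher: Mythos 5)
Your proposal is correct and is essentially the paper's own proof: inserting \eqref{Tbar0}, the difference of the two sides collapses to $\varepsilon(1-\alpha)\bigl(\alpha(1+\varepsilon)-\varepsilon\bigr)(T_1-T_2)^2$, which is exactly your $a\,(T_1-T_2)^2$ (the paper writes the coefficient as $(1-\alpha)\varepsilon(\alpha-(1-\alpha)\varepsilon)$), and nonnegativity follows from $\frac{\varepsilon}{1+\varepsilon}\leq\alpha\leq 1$. The algebraic coincidence you flag does hold --- the $T_1^2$ and $T_2^2$ coefficients both equal $\varepsilon\beta(1-\beta(1+\varepsilon))$ and the cross term is $-2$ times that --- so there is no gap.
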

\begin{proof}
If we insert the expressions for $\bar{T}_{12}, \bar{T}_{21}$ given by \eqref{Tbar0} we get that \eqref{Tbar} is equivalent to
\begin{align*}
(1- \alpha) \varepsilon (\alpha - (1- \alpha) \varepsilon) (T_1-T_2)^2 \geq 0
\end{align*}
This is true if $1 \geq \alpha \geq \frac{\varepsilon}{1+ \varepsilon}$ which we assumed in \eqref{asspar}.
\end{proof}
\begin{lemma}
We assume \eqref{asspar}. Then we have
\begin{align*}
\varepsilon \gamma_1 \tilde{\gamma} + \gamma \gamma_2 \leq (1+ \varepsilon) \gamma \tilde{\gamma}
\end{align*}
\label{inequ3}
\end{lemma}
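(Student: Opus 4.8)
The plan is to imitate the proof of Lemma \ref{inequ2}: insert the explicit expressions from \eqref{defgam} into the claimed inequality, reduce it to a single scalar inequality in the free parameter $\gamma$, and then read off the required sign from the parameter ranges in \eqref{asspar} and the structural assumption \eqref{coll}.

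First I would record the two algebraic consequences of \eqref{defgam} that make the substitution painless: writing $A:=\frac{m_1}{d}(1-\delta)$ we have $\gamma_1=A(1-\delta)$ and $\tilde\gamma=\varepsilon(A-\gamma)$, while $\gamma_2=\gamma_1\,\varepsilon^2\frac{m_1}{m_2}$ is immediate from the definitions of $\gamma_1$ and $\gamma_2$. Substituting these into $\varepsilon\gamma_1\tilde\gamma+\gamma\gamma_2\le(1+\varepsilon)\gamma\tilde\gamma$ and dividing by the positive factor $\varepsilon$, the claim becomes
\[
\varepsilon\gamma_1(A-\gamma)+\varepsilon\tfrac{m_1}{m_2}\,\gamma\gamma_1\;\le\;(1+\varepsilon)\,\gamma(A-\gamma).
\]
Moving everything to one side I would set
\[
Q(\gamma):=(A-\gamma)\bigl[(1+\varepsilon)\gamma-\varepsilon\gamma_1\bigr]-\varepsilon\tfrac{m_1}{m_2}\,\gamma\gamma_1,
\]
so that the assertion is $Q(\gamma)\ge0$ for every admissible $\gamma$. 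The decisive structural observation is that $Q$ is a \emph{concave} quadratic in $\gamma$, its leading coefficient being $-(1+\varepsilon)<0$; hence it suffices to verify $Q\ge0$ at the two endpoints of the admissible interval, which by \eqref{asspar} is exactly $\gamma\in\bigl[\gamma_1,\,A\frac{\varepsilon}{1+\varepsilon}\bigr]$.

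The endpoint evaluation is a short computation. At the lower endpoint one finds $Q(\gamma_1)=\gamma_1\bigl[A-(1+\varepsilon\frac{m_1}{m_2})\gamma_1\bigr]$, and at the upper endpoint $Q\bigl(A\frac{\varepsilon}{1+\varepsilon}\bigr)=\frac{A\varepsilon}{1+\varepsilon}\bigl[A-(1+\varepsilon\frac{m_1}{m_2})\gamma_1\bigr]$. Both are nonnegative multiples of the single quantity $A-(1+\varepsilon\frac{m_1}{m_2})\gamma_1$, and using $\varepsilon\frac{m_1}{m_2}\le1$ from \eqref{coll} it is therefore enough to prove $A\ge 2\gamma_1$, equivalently $\frac{A}{\gamma_1}=\frac{1}{1-\delta}\ge2$, i.e. $\delta\ge\tfrac12$.

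The one genuinely non-mechanical step is to see that $\delta\ge\tfrac12$, although not stated explicitly, is forced by the constraints: the $\gamma$-range in \eqref{asspar} is nonempty only if $\gamma_1\le A\frac{\varepsilon}{1+\varepsilon}$, i.e. $1-\delta\le\frac{\varepsilon}{1+\varepsilon}$, whence $\frac{A}{\gamma_1}=\frac{1}{1-\delta}\ge\frac{1+\varepsilon}{\varepsilon}\ge2$ because $\varepsilon\le1$. This yields $A\ge2\gamma_1$, so both endpoint values are $\ge0$, and concavity then gives $Q\ge0$ on the whole interval, completing the proof. I expect this implicit-bound step to be the main obstacle, since the rest is the endpoint algebra together with the elementary concavity argument; note in passing that both endpoint values vanish exactly when $A=2\gamma_1$, which corresponds to the degenerate case $\delta=\frac{1}{1+\varepsilon}$, $\varepsilon=1$ in which the admissible interval collapses to a point and the inequality holds with equality.
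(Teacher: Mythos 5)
Your proof is correct, but it takes a genuinely different route from the paper's. The paper splits the difference $(1+\varepsilon)\gamma\tilde{\gamma}-\varepsilon\gamma_1\tilde{\gamma}-\gamma\gamma_2$ into two separately nonnegative pieces, $\varepsilon(\gamma-\gamma_1)\tilde{\gamma}$ and $\gamma(\tilde{\gamma}-\gamma_2)$: the first is handled by the lower bound $\gamma\ge\gamma_1$ in \eqref{asspar} together with $\tilde{\gamma}\ge0$, while the second reduces, after dividing by $\varepsilon\gamma$, to $\gamma\le(1-\delta)\frac{m_1}{d}\bigl(1-(1-\delta)\varepsilon\frac{m_1}{m_2}\bigr)$, which follows from the upper bound on $\gamma$, the explicit bound $\delta\ge\frac{\varepsilon}{1+\varepsilon}$ and $\varepsilon\frac{m_1}{m_2}\le1$ from \eqref{coll}. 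You instead treat the whole expression as a single concave quadratic $Q(\gamma)$ and check the two endpoints of the admissible interval $\bigl[\gamma_1,\,A\frac{\varepsilon}{1+\varepsilon}\bigr]$; your endpoint evaluations and the reduction to $A\ge2\gamma_1$ are correct, and your extraction of the implicit bound $\delta\ge\frac{1}{1+\varepsilon}\ge\frac12$ from the mere consistency of the two $\gamma$-bounds in \eqref{asspar} is legitimate, since the lemma presupposes a $\gamma$ satisfying both. What your argument buys is a global statement ($Q\ge0$ on the entire admissible interval, not just at the given $\gamma$, with a clear picture of where it degenerates) and no need to guess a clever splitting; what the paper's buys is brevity and the fact that it only invokes the bounds stated explicitly in \eqref{asspar}. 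One small inaccuracy in your closing aside: the common factor of the two endpoint values is $A-\bigl(1+\varepsilon\frac{m_1}{m_2}\bigr)\gamma_1$, so they vanish when $A=\bigl(1+\varepsilon\frac{m_1}{m_2}\bigr)\gamma_1$ (or, for the lower endpoint, when $\gamma_1=0$), not exactly when $A=2\gamma_1$ unless $\varepsilon\frac{m_1}{m_2}=1$; this is a parenthetical remark and does not affect the validity of the proof.
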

\begin{proof}
If we insert the expressions for $\gamma_1, \gamma_2, \tilde{\gamma}$ given by \eqref{defgam}, we obtain
\begin{align*}
- ((1+\varepsilon) \gamma (- \varepsilon \gamma + (1-\delta) \varepsilon \frac{m_1}{d})) + (1-\delta)^2 \varepsilon \frac{m_1}{d} (- \varepsilon \gamma + (1-\delta) \varepsilon \frac{m_1}{d}) \\+ (1-\delta)^2 \gamma \varepsilon^2 \frac{m_1}{d} \frac{m_1}{m_2} \leq 0
\end{align*}
This inequality is true if we can prove separately 
\begin{align}
\begin{split}
\varepsilon \gamma (- \varepsilon \gamma + (1-\delta) \varepsilon \frac{m_1}{d}) \geq (1-\delta)^2 \varepsilon \frac{m_1}{d} (- \varepsilon \gamma + (1-\delta) \varepsilon \frac{m_1}{d}) \\
\varepsilon \gamma^2 - (1-\delta) \varepsilon \frac{m_1}{d} \gamma + (1-\delta)^2 \varepsilon^2 \gamma \frac{m_1}{d} \frac{m_1}{m_2} \leq 0
\end{split}
\label{equin}
\end{align}
We start with the first inequality. The factor $- \varepsilon \gamma + (1-\delta) \varepsilon \frac{m_1}{d}$ is non-negative, since this is the condition ensuring positivity of the temperatures \eqref{gammapos}. Therefore, we get that $\gamma$ has to satisfy
\begin{align*}
\gamma \geq (1-\delta)^2 \frac{m_1}{d}
\end{align*}
as assumed in \eqref{asspar}. This is possible and no restriction to the upper bound ensuring the positivity \eqref{gammapos}, since we assumed $0 \leq \delta \leq 1$.  \\ Now, for the second inequality in \eqref{equin}, we divide by $\varepsilon \gamma$ to get
\begin{align*}
\gamma - (1- \delta) \frac{m_1}{d} + (1-\delta)^2 \varepsilon \frac{m_1}{d} \frac{m_1}{m_2} \leq 0
\end{align*}
which is satisfied if 
\begin{align*}
\gamma \geq (1-\delta) \frac{m_1}{d}((1-\delta) \varepsilon \frac{m_1}{m_2}-1)
\end{align*}
This is satisfied due to the estimate on $\gamma$ in \eqref{asspar} and assumption \eqref{coll}.
\end{proof}
\begin{lemma}
We assume \eqref{asspar}. Then, we have
\begin{align*}
\varepsilon T_1 \tilde{\gamma} |u_1-u_2|^2 + \varepsilon \gamma_1 |u_1-u_2|^2 \bar{T}_{21} + \bar{T}_{12} \gamma_2 |u_1-u_2|^2 + \gamma |u_1-u_2|^2 T_2  \\ \leq (1+\varepsilon) \bar{T}_{12} \tilde{\gamma} |u_1-u_2|^2 + (1+\varepsilon)  \gamma |u_1-u_2|^2 \bar{T}_{21}
\end{align*}
\label{inequ4}
\end{lemma}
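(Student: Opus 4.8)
The plan is to reduce the claimed inequality to two elementary sign conditions by a degree count in the temperatures. First I would note that every term on both sides carries the common nonnegative factor $|u_1-u_2|^2$; since this factor may vanish, it suffices to prove the inequality obtained after cancelling it, namely
\begin{align*}
\varepsilon T_1 \tilde{\gamma} + \varepsilon \gamma_1 \bar{T}_{21} + \bar{T}_{12}\gamma_2 + \gamma T_2 \leq (1+\varepsilon)\bigl(\bar{T}_{12}\tilde{\gamma} + \gamma \bar{T}_{21}\bigr).
\end{align*}

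Next I would exploit the key structural feature: the quantities $\gamma_1,\gamma_2,\gamma,\tilde{\gamma}$ defined in \eqref{defgam} do not depend on $T_1,T_2$, whereas $\bar{T}_{12},\bar{T}_{21}$ defined in \eqref{Tbar0} are linear and homogeneous in $(T_1,T_2)$. Hence each term above is a product of exactly one temperature factor with one $\gamma$-type factor, so the difference $D:=(1+\varepsilon)(\bar{T}_{12}\tilde{\gamma}+\gamma\bar{T}_{21})-\varepsilon T_1\tilde{\gamma}-\varepsilon\gamma_1\bar{T}_{21}-\bar{T}_{12}\gamma_2-\gamma T_2$ is homogeneous of degree one in $(T_1,T_2)$; that is, $D=c_1 T_1+c_2 T_2$ with coefficients $c_1,c_2$ depending only on $\alpha,\delta,\gamma,\varepsilon,m_1,m_2,d$. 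Since $T_1,T_2>0$ are arbitrary, $D\geq 0$ holds for all states if and only if $c_1\geq 0$ and $c_2\geq 0$ (letting $T_2\to 0$ forces $c_1\geq0$, and symmetrically for $c_2$).

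Then I would compute $c_1$ and $c_2$ explicitly by collecting the coefficients of $T_1$ and $T_2$ after inserting $\bar{T}_{12}=\alpha T_1+(1-\alpha)T_2$ and $\bar{T}_{21}=\varepsilon(1-\alpha)T_1+(1-\varepsilon(1-\alpha))T_2$, obtaining
\begin{align*}
c_1&=(1+\varepsilon)\alpha\tilde{\gamma}+(1+\varepsilon)\varepsilon(1-\alpha)\gamma-\varepsilon\tilde{\gamma}-\varepsilon^2(1-\alpha)\gamma_1-\alpha\gamma_2,\\
c_2&=(1+\varepsilon)(1-\alpha)\tilde{\gamma}+(1+\varepsilon)(1-\varepsilon(1-\alpha))\gamma-\varepsilon(1-\varepsilon(1-\alpha))\gamma_1-(1-\alpha)\gamma_2-\gamma.
\end{align*}
Substituting the definitions \eqref{defgam} turns $c_1\geq 0$ and $c_2\geq 0$ into two polynomial inequalities in $\alpha,\delta,\varepsilon$ and the mass ratio $m_1/m_2$, free of the temperatures.

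Finally I would verify these two sign conditions using the full strength of the standing assumptions \eqref{coll} and \eqref{asspar}: namely $\varepsilon\leq 1$, $\varepsilon m_1/m_2\leq 1$, $\alpha\geq \varepsilon/(1+\varepsilon)$, and the two-sided bound $(1-\delta)^2\frac{m_1}{d}\leq\gamma\leq(1-\delta)\frac{m_1}{d}\frac{\varepsilon}{1+\varepsilon}$. These are exactly the bounds already exploited in Lemmas \ref{inequ2} and \ref{inequ3}, and indeed $c_1\geq0$, $c_2\geq0$ are the \emph{mixed} analogues of those two lemmas. I expect this last step to be the main obstacle: unlike Lemmas \ref{inequ2} and \ref{inequ3}, each of which collapsed to a single perfect square or a single monotone bound, the coefficients $c_1,c_2$ couple the temperature constraint on $\alpha$ with the constraints on $\gamma$, so establishing their nonnegativity will require invoking the lower bound $\gamma\geq(1-\delta)^2\frac{m_1}{d}=\gamma_1$ and the upper bound $\gamma\leq(1-\delta)\frac{m_1}{d}\frac{\varepsilon}{1+\varepsilon}$ simultaneously, and using $\varepsilon m_1/m_2\leq 1$ to dominate the $\gamma_2$-contributions.
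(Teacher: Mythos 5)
Your proposal follows essentially the same route as the paper: the paper likewise inserts \eqref{Tbar0}, compares the coefficients of $T_1$ and $T_2$ separately (your $c_1\geq 0$ and $c_2\geq 0$ are exactly its two displayed inequalities), and then closes the argument with the bounds $\gamma_1\leq\gamma$, $\gamma_2\leq\gamma$ (the latter via $\gamma_2=\gamma_1\,\varepsilon^2 m_1/m_2$ and \eqref{coll}), $\tilde{\gamma}\geq\gamma$ (from the upper bound on $\gamma$ in \eqref{asspar}), and $\alpha\geq\varepsilon/(1+\varepsilon)$. The final step you flag as the main obstacle in fact goes through cleanly with precisely those ingredients: after replacing $\gamma_1,\gamma_2$ by $\gamma$, both sign conditions reduce to $(\alpha-\varepsilon(1-\alpha))\gamma\leq(\alpha-\varepsilon(1-\alpha))\tilde{\gamma}$ for $c_1$ and to an identity in the coefficients for $c_2$, so your plan is complete modulo that short computation.
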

\begin{proof}
We insert the expressions for $\bar{T}_{12}, \bar{T}_{21}$ given by \eqref{Tbar0} and get
\begin{align*}
&\varepsilon T_1 \tilde{\gamma} |u_1-u_2|^2 + \varepsilon \gamma_1 |u_1-u_2|^2 (\varepsilon (1-\alpha) T_1 + (1-\varepsilon (1-\alpha)) T_2) \\&+ (\alpha T_1+(1-\alpha) T_2) \gamma_2 |u_1-u_2|^2 + \gamma |u_1-u_2|^2 T_2  \\ &\leq (1+\varepsilon) (\alpha T_1+(1-\alpha) T_2) \tilde{\gamma} |u_1-u_2|^2 \\&+ (1+\varepsilon)  \gamma |u_1-u_2|^2 (\varepsilon (1-\alpha) T_1 + (1-\varepsilon (1-\alpha)) T_2)\end{align*}
We compare the coefficients in front of $T_1$ and $T_2$ and obtain the inequalities 
\begin{align*}
\varepsilon \tilde{\gamma} + \varepsilon \gamma_1 \varepsilon (1-\alpha) + \alpha \gamma_2 &\leq (1+\varepsilon) \alpha \tilde{\gamma} + (1+\varepsilon) \gamma \varepsilon (1-\alpha) \\
\varepsilon \gamma_1 (1- \varepsilon (1-\alpha)) + (1- \alpha) \gamma_2 + \gamma &\leq (1+\varepsilon)(1- \alpha) \tilde{\gamma} + (1+\varepsilon) (1- \varepsilon(1-\alpha)) \gamma
\end{align*}
We start with the first inequality. According to the definition of $\gamma_1, \gamma_2$ given by \eqref{defgam} and the lower bound on $\gamma$ given by \eqref{gammapos} and assumption \eqref{coll}, we have
\begin{align*}
\gamma_1 \leq \gamma \text{ and } \gamma_2 \leq \gamma
\end{align*}
Additionally, we observe that 
\begin{align*}
\tilde{\gamma} = \varepsilon (1-\delta) \frac{m_1}{d} - \varepsilon \gamma \geq \gamma
\end{align*}
since we assumed the stricter upper bound on $\gamma$ in \eqref{asspar}. The stricter upper bound on $\gamma$ is not a contradiction to the lower bound since we assumed $\delta \geq \frac{\varepsilon}{1+\varepsilon} in \eqref{asspar}.$
All in all, this leads to 
\begin{align*}
\varepsilon \gamma_1 \varepsilon (1-\alpha) + \alpha \gamma_2 \leq (\varepsilon^2 (1-\alpha) + \alpha) \gamma = (\varepsilon^2 (1-\alpha) + \varepsilon (1-\alpha))\gamma + (\alpha - \varepsilon (1-\alpha)) \gamma \\ \leq (\varepsilon^2 (1-\alpha) + \varepsilon (1-\alpha))\gamma + (\alpha - \varepsilon (1-\alpha)) \tilde{\gamma}
\end{align*}
which corresponds to the first inequality. The last inequality is possible since we assumed $\alpha \geq \frac{\varepsilon}{1+\varepsilon}$ in \eqref{asspar} and therefore the coefficient $\alpha- \varepsilon (1-\alpha)$ is non-negative. 
In a similar way, one can prove the second inequality.
\end{proof}

\begin{theorem}[H-theorem for \textcolor{black}{mixtures}]
Assume $f_1, f_2 >0$. 
Assume the relationship between the collision frequencies \eqref{coll}, the conditions for the interspecies Maxwellians \eqref{density}, \eqref{convexvel}, \eqref{veloc}, \eqref{contemp} and \eqref{temp} with $\alpha, \delta \neq 1$, the positivity of the temperatures \eqref{gamma} and the assumptions on the parameters \eqref{asspar}. \textcolor{black}{Then}
\begin{align*}
\int_{\mathbb{R}^d} (\ln f_1) ~ Q^{FP}_{11}(f_1,f_1) &+ (\ln f_1) ~ Q^{FP}_{12}(f_1,f_2) dv \\&+ \int_{\mathbb{R}^d} (\ln f_2) ~ Q^{FP}_{22}(f_2, f_2)+ (\ln f_2) ~ Q^{FP}_{21}(f_2, f_1) dv\leq 0 ,
\end{align*}
with equality if and only if $f_1$ and $f_2$ are Maxwell distributions with equal velocity and temperature. 
\label{H-theorem}
\end{theorem}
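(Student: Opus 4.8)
The plan is to split the entropy functional into the two intra-species contributions and the combined inter-species contribution, treating the former by the classical one-species argument and the latter by assembling the four lemmas above.

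First I would handle each self-interaction term. Writing $M_i$ for the Maxwellian with density $n_i$, velocity $u_i$ and temperature $T_i$, one checks the algebraic identity $\nabla_v(\frac{T_i}{m_i}f_i)+(v-u_i)f_i=\frac{T_i}{m_i}M_i\nabla_v\frac{f_i}{M_i}$, so that $Q^{FP}_{ii}(f_i,f_i)=c_{ii}n_i\frac{T_i}{m_i}\,\mathrm{div}(M_i\nabla_v\frac{f_i}{M_i})$. Since $\ln M_i$ is a linear combination of the collision invariants $1,v,|v|^2$, Theorem~\ref{constraints_intra} gives $\int \ln M_i\,Q^{FP}_{ii}\,dv=0$, hence $\int \ln f_i\,Q^{FP}_{ii}\,dv=\int\ln\frac{f_i}{M_i}\,Q^{FP}_{ii}\,dv$. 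One integration by parts then yields $\int\ln f_i\,Q^{FP}_{ii}\,dv=-c_{ii}n_i\frac{T_i}{m_i}\int\frac{M_i^2}{f_i}\big|\nabla_v\frac{f_i}{M_i}\big|^2\,dv\le 0$, with equality if and only if $f_i=M_i$.

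Next I would treat the inter-species pair. Integrating by parts directly gives $\int\ln f_1\,Q^{FP}_{12}\,dv=-F_1+c_{12}n_2 d\, n_1$ and $\int\ln f_2\,Q^{FP}_{21}\,dv=-F_2+c_{21}n_1 d\, n_2$, where $F_1:=\frac{T_{12}}{m_1}c_{12}n_2\int\frac{|\nabla_v f_1|^2}{f_1}\,dv$ and $F_2:=\frac{T_{21}}{m_2}c_{21}n_1\int\frac{|\nabla_v f_2|^2}{f_2}\,dv$; the constants come from $\int\nabla_v f_i\cdot(v-u_{ij})\,dv=-d\, n_i$ together with $c_{12}+c_{21}=(1+\varepsilon)c_{21}$. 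Now I invoke Lemma~\ref{inequ}, whose left-hand side is the sum $R_1+R_2$ of two nonnegative relative Fisher informations and which reads $R_1+R_2=F_1+F_2+A$ with $A$ the explicit remainder. Substituting $F_1+F_2=R_1+R_2-A$ turns the inter-species contribution into $-(R_1+R_2)+B$, where, after using $c_{12}=\varepsilon c_{21}$ and writing $w:=|u_1-u_2|^2$, $B=c_{21}n_1n_2 d\big[\varepsilon\frac{T_1+\gamma_1 w}{T_{12}}+\frac{T_2+\gamma_2 w}{T_{21}}-(1+\varepsilon)\big]$ with $\gamma_1,\gamma_2,\tilde\gamma$ as in \eqref{defgam}.

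It then remains to show $B\le 0$, i.e. $\varepsilon\frac{T_1+\gamma_1 w}{T_{12}}+\frac{T_2+\gamma_2 w}{T_{21}}\le 1+\varepsilon$. Since the temperatures are positive, I would clear the denominators $T_{12}=\bar{T}_{12}+\gamma w$ and $T_{21}=\bar{T}_{21}+\tilde\gamma w$ and collect the resulting polynomial in $w$. The key observation is that its $w^0$ coefficient is precisely Lemma~\ref{inequ2}, its $w^1$ coefficient Lemma~\ref{inequ4}, and its $w^2$ coefficient Lemma~\ref{inequ3}; since $w\ge 0$, adding the three gives $B\le 0$, and combined with the self-interaction estimates this proves the inequality. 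For the equality case, vanishing of the functional forces each nonpositive piece to vanish: the self-interaction terms give $f_1=M_1,\ f_2=M_2$, while $R_1=R_2=0$ force $f_1=M_{12},\ f_2=M_{21}$ (the normalizations match since $n_{12}=n_1,\ n_{21}=n_2$ by \eqref{density}); comparing $M_1=M_{12}$ yields $u_1=\delta u_1+(1-\delta)u_2$, so $u_1=u_2$ because $\delta\neq1$, and then $T_1=\alpha T_1+(1-\alpha)T_2$ forces $T_1=T_2$ because $\alpha\neq1$. The converse is a direct check that $w=0$, $M_{12}=M_1$, $M_{21}=M_2$ and $B=0$. I expect the main obstacle to be the bookkeeping of the inter-species step—recognizing that Lemma~\ref{inequ} reduces everything to the single rational inequality $B\le0$ whose coefficients in $w$ are exactly the three algebraic lemmas.
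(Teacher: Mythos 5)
Your proposal is correct and follows essentially the same route as the paper: integrate the inter-species terms by parts, use Lemma~\ref{inequ} to rewrite the Fisher-information terms as nonnegative relative Fisher informations plus an explicit remainder, reduce the remainder to the single rational inequality $\varepsilon\frac{T_1+\gamma_1 w}{T_{12}}+\frac{T_2+\gamma_2 w}{T_{21}}\le 1+\varepsilon$, and split its numerator by powers of $w$ into exactly Lemmas~\ref{inequ2}, \ref{inequ4} and \ref{inequ3}, with the same equality analysis via $f_1=M_{12}$, $f_2=M_{21}$. The only cosmetic difference is that you spell out the one-species argument for the intra-species terms where the paper cites the literature.
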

\begin{proof}
The fact that $\int_{\mathbb{R}^d} \ln f_k Q(f_k,f_k)dv \leq 0$, $k=1,2$ is shown in proofs of the H-theorem of the single Fokker-Planck-model, for example in \cite{Singh}.
In both cases we have equality if and only if $f_1=M_1$ and $f_2 = M_2$. \\

Let us define
\begin{align*}
I:&= \int_{\mathbb{R}^d} Q^{FP}_{12}(f_1,f_2) \ln f_1 dv + \int_{\mathbb{R}^d} Q^{FP}_{21}(f_2, f_1) \ln f_2 dv \\ &=\int_{\mathbb{R}^d} c_{12} n_1 \text{ div} ( \nabla_v ( \frac{T_{12}}{m_1} f_1) + (v-u_{12}) f_1 ) \ln f_1 dv
\\&+ \int_{\mathbb{R}^d} c_{21} n_2 \text{div} ( \nabla_v ( \frac{T_{21}}{m_2} f_2) + (v-u_{21}) f_2 ) \ln f_2 dv
\end{align*}
Integration by parts leads to
\begin{align*}
I& =- \int_{\mathbb{R}^d} c_{12} n_1 ( \nabla_v ( \frac{T_{12}}{m_1} f_1)  + (v-u_{12}) f_1 ) \frac{\nabla_v f_1}{f_1} dv
\\ &-  \int_{\mathbb{R}^d} c_{21} n_2  ( \nabla_v ( \frac{T_{21}}{m_2} f_2) + (v-u_{21}) f_2 ) \frac{\nabla_v f_2}{f_2} dv
\\ &=\int_{\mathbb{R}^d} c_{12} n_1 \frac{T_{12}}{m_1} f_1 |\frac{\nabla_v f_1}{f_1}|^2 - \int_{\mathbb{R}^d} c_{21} n_2 \frac{T_{21}}{m_2} f_2 |\frac{\nabla_v f_2}{f_2}|^2 dv \\& - \int_{\mathbb{R}^d} c_{12} n_2 (v-u_{12}) \cdot \nabla_v f_1 dv - \int_{\mathbb{R}^d} c_{21} n_1 (v-u_{21}) \cdot \nabla_v f_2 dv
\\ &=- \int_{\mathbb{R}^d} c_{12} n_2  \frac{T_{12}}{m_1} f_1 |\frac{\nabla_v f_1}{f_1}|^2 dv
-  \int_{\mathbb{R}^d} c_{21} n_2   \frac{T_{21}}{m_2} f_2 |\frac{\nabla_v f_2}{f_2}|^2  dv + c_{12} n_2 n_1 d + c_{21} n_1 n_2 d
\end{align*}
By using the relationship \eqref{coll}, we obtain
\begin{align*}
I: =- \int_{\mathbb{R}^d} c_{12} n_2  \frac{T_{12}}{m_1} f_1| \frac{\nabla_v f_1}{f_1}|^2) dv
- \int_{\mathbb{R}^d} c_{21} n_2   \frac{T_{21}}{m_2} f_2 |\frac{\nabla_v f_2}{f_2}|^2)  dv + c_{21} n_2 n_1 d (1+\varepsilon)
\end{align*}
By using lemma \ref{inequ}, we can write 
\begin{align}
\begin{split}
I &=- \frac{T_{12}}{m_1} c_{12} n_2 \int_{\mathbb{R}^d} \frac{M_{12}^2}{f_1} \left(\frac{\nabla_v f_1 M_{12}- \nabla_v M_{12} f_1}{M_{12}^2} \right)^2 dv \\& - \frac{T_{21}}{m_2} c_{21} n_1 \int_{\mathbb{R}^d} \frac{M_{21}^2}{f_2} \left(\frac{\nabla_v f_2 M_{21}- \nabla_v M_{21} f_2}{M_{21}^2} \right)^2 dv \\&+ c_{12} n_2 n_1 d \frac{T_1 + \frac{m_1}{d} (1-\delta)^2 |u_1-u_2|^2}{T_{12}}  + c_{21} n_2 n_1 d \frac{T_2 + \frac{m_2}{d} \varepsilon^2 (\frac{m_1}{m_2})^2 (1-\delta)^2 |u_1-u_2|^2}{T_{21}} \\ &- (1+ \varepsilon) c_{21} n_1 n_2 d
\label{I}
\end{split}
\end{align}
The first two terms are non-positive, so we get the claimed inequality if we can prove
\begin{align*}
&c_{12} n_2 n_1 d \left(T_1 + \frac{m_1}{d} (1- \delta)^2 |u_1-u_2|^2\right) T_{21} \\&+ c_{21} n_1 n_2 d T_{12} \left(T_2 + \frac{m_2}{d} (1-\delta)^2 \varepsilon^2 (\frac{m_1}{m_2})^2 |u_1-u_2|^2\right) \\ &\leq (1+\varepsilon) c_{21} n_1 n_2 d T_{12} T_{21}
\end{align*}
which is by using relationship \eqref{coll} equivalent to
\begin{align*}
\varepsilon  \left(T_1 + \frac{m_1}{d} (1- \delta)^2 |u_1-u_2|^2 \right) T_{21} + T_{12} \left(T_2 + \frac{m_2}{d} (1-\delta)^2 \varepsilon^2 (\frac{m_1}{m_2})^2 |u_1-u_2|^2\right) \\ \leq (1+\varepsilon)  T_{12} T_{21}\end{align*}
With the notation introduced in \eqref{defgam} and \eqref{Tbar}, we can write
\begin{align*}
\gamma_1 = \frac{m_1}{d} (1- \delta)^2, \quad \gamma_2 = \frac{m_2}{d} (1-\delta)^2 \varepsilon^2 (\frac{m_1}{m_2})^2
\end{align*}
and \textcolor{black}{therefore we have}
\begin{align*}
T_{12} =: \bar{T}_{12} + \gamma |u_1-u_2|^2, \quad T_{21} =: \bar{T}_{21} +\tilde{\gamma} |u_1-u_2|^2
\end{align*}
Then we get
\begin{align*}
\varepsilon (T_1 + \gamma_1 |u_1-u_2|^2)( \bar{T}_{21} + \tilde{\gamma} |u_1-u_2|^2) + (\bar{T}_{12} + \gamma |u_1-u_2|^2)(T_2 + \gamma_2 |u_1-u_2|^2) \\ \leq (1+\varepsilon) (\bar{T}_{12} +\gamma |u_1-u_2|^2) (\bar{T}_{21} + \tilde{\gamma}|u_1-u_2|^2)
\end{align*}
This is equivalent to
\begin{align*}
\varepsilon T_1 \bar{T}_{21} + \varepsilon T_1 \tilde{\gamma} |u_1-u_2|^2 + \varepsilon \gamma_1 |u_1-u_2|^2 \bar{T}_{21} + \varepsilon \gamma_1 \tilde{\gamma} |u_1-u_2|^4 + \bar{T}_{12} T_2 \\+ \bar{T}_{12} \gamma_2 |u_1-u_2|^2  + \gamma |u_1-u_2|^2 T_2+ \gamma \gamma_2 |u_1-u_2|^4 \\ \leq (1+\varepsilon) (\bar{T}_{12} \bar{T}_{21} + \bar{T}_{12} \tilde{\gamma} |u_1-u_2|^2 + \gamma |u_1-u_2|^2 \bar{T}_{21} + \gamma \tilde{\gamma} |u_1-u_2|^4
\end{align*}
This is true if we have separately 
\begin{align}
\varepsilon T_1 \bar{T}_{21} + \bar{T}_{12} T_2 \leq (1+\varepsilon) \bar{T}_{12} \bar{T}_{21}
\end{align}
\begin{align}
(\varepsilon \gamma_1 \tilde{\gamma} + \gamma \gamma_2) |u_1-u_2|^4 \leq (1+ \varepsilon) \gamma \tilde{\gamma} |u_1-u_2|^4
\end{align}
\begin{align}
\begin{split}
\varepsilon T_1 \tilde{\gamma} |u_1-u_2|^2 + \varepsilon \gamma_1 |u_1-u_2|^2 \bar{T}_{21} + \bar{T}_{12} \gamma_2 |u_1-u_2|^2 + \gamma |u_1-u_2|^2 T_2 \\ \leq (1+\varepsilon)  \bar{T}_{12} \tilde{\gamma} |u_1-u_2|^2 + (1+\varepsilon) \gamma |u_1-u_2|^2 \bar{T}_{21}
\end{split}
\end{align}
These three inequalities are satisfied according to lemmas \ref{inequ2}, \ref{inequ3}, \ref{inequ4}. Then the last four terms in \eqref{I} can be estimated by zero from above. So we obtain
\begin{align*}
I \leq - \frac{T_{12}}{m_1} c_{12} n_2 \int_{\mathbb{R}^d} \frac{M_{12}^2}{f_1} \left(\frac{\nabla_v f_1 M_{12}- \nabla_v M_{12} f_1}{M_{12}^2} \right)^2 dv\\ - \frac{T_{21}}{m_2} c_{21} n_1 \int_{\mathbb{R}^d} \frac{M_{21}^2}{f_2} \left(\frac{\nabla_v f_2 M_{21}- \nabla_v M_{21} f_2}{M_{21}^2} \right)^2 dv
\\ =  - \frac{T_{12}}{m_1} c_{12} n_2 \int_{\mathbb{R}^d} f_1 \left| \frac{M_{12}}{f_1} \nabla_v \left( \frac{f_1}{M_{12}} \right) \right|^2 dv - \frac{T_{21}}{m_2} c_{21} n_1 \int_{\mathbb{R}^d} f_2 \left| \frac{M_{21}}{f_2} \nabla_v \left( \frac{f_2}{M_{21}} \right) \right|^2 dv
\\ =  - \frac{T_{12}}{m_1} c_{12} n_2 \int_{\mathbb{R}^d} f_1 \left| \nabla_v \ln \frac{f_1}{M_{12}} \right|^2 dv - \frac{T_{21}}{m_2} c_{21} n_1 \int_{\mathbb{R}^d} f_2 \left| \nabla_v \ln \frac{f_2}{M_{21}} \right|^2 dv \leq 0
\end{align*}
with equality if and only if $f_1=M_{12}$ and $f_2=M_{21}$. This means the equality is characterized by two Maxwell distributions. In addition, if we compute the mean velocities of these expressions, we get in case of equality $u_1=u_{12}=\delta u_1+(1-\delta) u_2$ which leads to $u_1=u_2$. Similar, for the temperatures, we obtain $T_1=T_2$.
\end{proof}
Define the total entropy $H(f_1,f_2) = \int_{\mathbb{R}^d} (f_1 \ln f_1 + f_2 \ln f_2) dv$. We can compute 
$$ \partial_t H(f_1,f_2) + \nabla_x \cdot \int_{\mathbb{R}^d} ( f_1 \ln f_1 + f_2 \ln f_2 ) v dv  = S(f_1,f_2),$$ by multiplying the Fokker-Planck equation for the species $1$ by $\ln f_1$, the Fokker-Planck  equation for the species $2$ by $\ln f_2$ and integrating the sum with respect to $v$. \\

 \begin{corollary}[Entropy inequality for mixtures]
Assume $f_1, f_2 >0$. Assume  a fast enough decay of $f_1, f_2$ to zero for $v\rightarrow \infty$.
Assume relationship \eqref{coll}, the conditions \eqref{density}, \eqref{convexvel}, \eqref{veloc}, \eqref{contemp} and \eqref{temp} with $\alpha, \delta \neq 1$, the positivity of the temperatures \eqref{gamma} and the assumptions on the free parameters \eqref{asspar}. \textcolor{black}{Then} we have the following entropy inequality
$$
\partial_t \left(\int_{\mathbb{R}^d}   f_1 \ln f_1  dv + \int_{\mathbb{R}^d} f_2 \ln f_2 dv \right) + \nabla_x \cdot \left(\int_{\mathbb{R}^d}  v f_1 \ln f_1  dv + \int_{\mathbb{R}^d} v f_2 \ln f_2 dv \right) \leq 0,
$$
with equality if and only if $f_1$ and $f_2$ are Maxwell distributions with equal bulk velocity and temperature. Moreover at equilibrium the interspecies Maxwellians $M_{12}$ and $M_{21}$ satisfy $u_{12} = u_2=u_1= u_{21}$ and $T_{12} = T_2=T_1=T_{21}$.
\end{corollary}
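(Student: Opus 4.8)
The plan is to reduce the statement directly to the H-theorem (Theorem \ref{H-theorem}) by identifying the entropy production $S(f_1,f_2)$ with the integral estimated there. First I would carry out the computation announced just before the statement: multiply the first equation of \eqref{FP} by $\ln f_1$, the second by $\ln f_2$, and integrate each over $v$. Using the elementary identities $(\partial_t f_k)\ln f_k = \partial_t(f_k \ln f_k - f_k)$ and $(v\cdot\nabla_x f_k)\ln f_k = v\cdot\nabla_x(f_k\ln f_k - f_k)$, together with $\int_{\mathbb{R}^d} v f_k\,dv = n_k u_k$, the left-hand side for species $k$ becomes
$$\partial_t \int_{\mathbb{R}^d} f_k \ln f_k\, dv + \nabla_x\cdot\int_{\mathbb{R}^d} v\, f_k \ln f_k\, dv - \partial_t n_k - \nabla_x\cdot(n_k u_k).$$
Here the assumed fast decay of $f_1,f_2$ as $|v|\to\infty$ is exactly what guarantees that the $v$-integrations produce no boundary contributions.

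The key simplification is that the two remainder terms $-\partial_t n_k - \nabla_x\cdot(n_k u_k)$ vanish identically, since they are precisely the left-hand side of the continuity equation for species $k$ obtained in section \ref{sec4}. Summing over $k=1,2$ then yields
$$\partial_t H(f_1,f_2) + \nabla_x\cdot\int_{\mathbb{R}^d}(f_1\ln f_1 + f_2\ln f_2)\,v\,dv = S(f_1,f_2),$$
with
$$S(f_1,f_2) = \int_{\mathbb{R}^d}(\ln f_1)\,(Q^{FP}_{11}+Q^{FP}_{12})\,dv + \int_{\mathbb{R}^d}(\ln f_2)\,(Q^{FP}_{22}+Q^{FP}_{21})\,dv.$$
This is verbatim the quantity shown to be nonpositive in Theorem \ref{H-theorem}, so $S(f_1,f_2)\le 0$, which is the asserted entropy inequality; the equality case, namely that $f_1,f_2$ are Maxwellians with common bulk velocity and temperature, is inherited directly from the equality characterization of Theorem \ref{H-theorem}.

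It remains to establish the final claim about the interspecies Maxwellians at equilibrium. In the equality case we have $u_1=u_2$ and $T_1=T_2$. Substituting $u_1=u_2$ into the definition \eqref{convexvel} of $u_{12}$ and into \eqref{veloc} for $u_{21}$ collapses both to the common velocity, since the factor $(1-\delta)\varepsilon\tfrac{m_1}{m_2}(u_2-u_1)$ vanishes; substituting $u_1=u_2$ and $T_1=T_2$ into \eqref{contemp} and \eqref{temp} kills every $|u_1-u_2|^2$ term and reduces the convex combinations of $T_1,T_2$ to the common temperature. This yields $u_{12}=u_2=u_1=u_{21}$ and $T_{12}=T_2=T_1=T_{21}$, as claimed.

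I expect no serious obstacle, as the content is genuinely a corollary of Theorem \ref{H-theorem}. The only point requiring care is the bookkeeping in the first step: one must verify that the density-flux remainder is exactly the continuity operator so that it cancels, and invoke the decay hypothesis to discard the velocity-boundary terms. Both are routine once the two algebraic identities above are in place.
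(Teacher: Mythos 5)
Your proposal is correct and follows essentially the same route the paper intends: the computation sketched immediately before the corollary (multiplying by $\ln f_1$, $\ln f_2$, integrating, and identifying the source term $S(f_1,f_2)$ with the integral of Theorem \ref{H-theorem}), with the sign and equality case inherited directly from the H-theorem. Your explicit verification that the remainder $-\partial_t n_k - \nabla_x\cdot(n_k u_k)$ cancels via the continuity equation, and the substitution $u_1=u_2$, $T_1=T_2$ into \eqref{convexvel}, \eqref{veloc}, \eqref{contemp}, \eqref{temp} for the final claim, are exactly the details the paper leaves implicit.
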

We now explicitly specify the global equilibrium. \\

\begin{theorem}[Equilibrium]
Assume $f_1, f_2 >0$.
Assume relationship \eqref{coll}, the conditions \eqref{density}, \eqref{convexvel}, \eqref{veloc}, \eqref{contemp} and \eqref{temp} and the positivity of the temperatures \eqref{gamma}.
Then $Q^{FP}_{11}(f_1,f_1)+Q^{FP}_{12}(f_1,f_2)=0$ and $Q^{FP}_{22}(f_2,f_2)+Q^{FP}_{21}(f_2,f_1)=0$, if and only if $f_1$ and $f_2$ are Maxwell distributions with equal mean velocity and temperature.
\end{theorem}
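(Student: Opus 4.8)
The plan is to prove the two implications separately, and the organizing idea throughout is that each pair $Q^{FP}_{kk}+Q^{FP}_{kj}$ can be collapsed into a \emph{single} effective Fokker--Planck operator. For the sufficiency (``if'') direction I would start from $u_1=u_2=:u$ and $T_1=T_2=:T$. Substituting into \eqref{convexvel} and \eqref{contemp} gives $u_{12}=u=u_1$ and $T_{12}=T=T_1$ (the $\gamma$-term vanishes because $u_1=u_2$), and by \eqref{veloc}, \eqref{temp} likewise $u_{21}=u_2$, $T_{21}=T_2$. Hence $M_{12}=M_1$ and $M_{21}=M_2$. Using the elementary identity $\nabla_v(\tfrac{T_k}{m_k}M_k)=-(v-u_k)M_k$, every integrand appearing in \eqref{FP} vanishes pointwise, so each of $Q^{FP}_{kk}$ and $Q^{FP}_{kj}$ is individually zero; this settles sufficiency.

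The substance is the ``only if'' direction, whose key step is to rewrite $Q^{FP}_{11}(f_1,f_1)+Q^{FP}_{12}(f_1,f_2)$ as one divergence. Writing the self operator as $c_{11}n_1\,\mathrm{div}(\tfrac{T_1}{m_1}f_1\nabla_v\ln(f_1/M_1))$ and the cross operator as $c_{12}n_2\,\mathrm{div}(\tfrac{T_{12}}{m_1}f_1\nabla_v\ln(f_1/M_{12}))$, I would add the two fluxes. Since $\nabla_v\ln M_1$ and $\nabla_v\ln M_{12}$ are affine in $v$, the combined flux equals $(A+B)\nabla_v f_1 + C f_1(v-\bar u)$ with the $v$-independent constants $A=c_{11}n_1\tfrac{T_1}{m_1}$, $B=c_{12}n_2\tfrac{T_{12}}{m_1}$, $C=c_{11}n_1+c_{12}n_2$ and $\bar u=(c_{11}n_1u_1+c_{12}n_2u_{12})/C$. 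This is precisely the flux of a single Fokker--Planck operator relaxing towards one effective Maxwellian $\tilde M_1\propto\exp(-C|v-\bar u|^2/(2(A+B)))$, i.e.\ a Maxwellian with velocity $\bar u$ and temperature $\tilde T_1=(c_{11}n_1T_1+c_{12}n_2T_{12})/C$, both being convex combinations of the data of $M_1$ and $M_{12}$. By positivity of the temperatures (Section \ref{sec3}) and of $c_{ij},n_i$, the constants $A,B,C$ are strictly positive.

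With the equation now in the form $\mathrm{div}((A+B)f_1\nabla_v\ln(f_1/\tilde M_1))=0$, I would run the standard single-species entropy argument: multiply by $\ln(f_1/\tilde M_1)$, integrate over $\mathbb{R}^d$, and integrate by parts (using the decay of $f_1$) to get $-(A+B)\int_{\mathbb{R}^d} f_1|\nabla_v\ln(f_1/\tilde M_1)|^2\,dv=0$. As $A+B>0$, this forces $\nabla_v\ln(f_1/\tilde M_1)=0$, so $f_1$ is a Maxwellian proportional to $\tilde M_1$; matching the density gives $f_1=\tilde M_1$. The same argument on species $2$ yields $f_2=\tilde M_2$. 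Since $f_1$ has moments $u_1,T_1$ whereas $\tilde M_1$ has moments $\bar u,\tilde T_1$, the identities $u_1=\bar u$ and $T_1=\tilde T_1$ reduce, after cancelling $c_{12}n_2$, to $u_1=u_{12}$ and $T_1=T_{12}$; analogously $u_2=u_{21}$ and $T_2=T_{21}$.

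The final step, closing the algebraic system, is where I expect the only real subtlety. Inserting $u_1=u_{12}$ into \eqref{convexvel} gives $(1-\delta)(u_1-u_2)=0$, and inserting $T_1=T_{12}$ into \eqref{contemp} (after $u_1=u_2$) gives $(1-\alpha)(T_1-T_2)=0$. To deduce the common velocity and temperature one genuinely needs $\delta\neq1$ and $\alpha\neq1$: if $\delta=1$ the velocities decouple and a common value cannot be forced, so I would include these as hypotheses, consistent with those of the H-theorem (Theorem \ref{H-theorem}). Granting $\delta,\alpha\neq1$ yields $u_1=u_2$ and then $T_1=T_2$, and the resulting Maxwellians are exactly the fixed point found in the ``if'' direction. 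It is worth noting that, unlike the H-theorem, this argument never invokes \eqref{asspar}, which was needed there only to sign the leftover entropy-production terms.
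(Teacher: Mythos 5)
Your proof is correct, but it takes a genuinely different route from the paper. The paper's proof of the ``only if'' direction is a one-liner: if both collision sums vanish pointwise, then the entropy production $\int(\ln f_1)(Q^{FP}_{11}+Q^{FP}_{12})+(\ln f_2)(Q^{FP}_{22}+Q^{FP}_{21})\,dv$ vanishes, so one is in the equality case of the H-theorem (Theorem \ref{H-theorem}), which already characterizes $f_1,f_2$ as Maxwellians with common velocity and temperature. You instead collapse each sum $Q^{FP}_{kk}+Q^{FP}_{kj}$ into a single effective Fokker--Planck operator relaxing towards one Maxwellian $\tilde M_k$ whose velocity and temperature are convex combinations of the data of $M_k$ and $M_{kj}$, run the standard single-species dissipation identity to force $f_k=\tilde M_k$, and then close the system by moment matching. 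Your approach buys two things the paper's does not: it avoids invoking \eqref{asspar} entirely (the paper's proof implicitly inherits the H-theorem's hypotheses, including \eqref{asspar} and $\alpha,\delta\neq 1$, even though the equilibrium theorem as stated does not list them), and it makes explicit both the ``if'' direction and the precise point where $\alpha,\delta\neq 1$ is genuinely needed --- namely to pass from $u_1=u_{12}$, $T_1=T_{12}$ to $u_1=u_2$, $T_1=T_2$. The paper's route is shorter but less self-contained; yours is longer but sharper about which hypotheses are actually used. Both arguments rely on integration by parts without boundary terms, so the decay assumption you mention should indeed be stated.
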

\begin{proof}
If $Q^{FP}_{11}(f_1,f_1)+Q^{FP}_{12}(f_1,f_2)=0$ and $Q^{FP}_{22}(f_2,f_2)+Q^{FP}_{21}(f_2,f_1)=0$, then $\ln f_1 ~ Q^{FP}_{11}(f_1,f_1)+\ln f_1 ~ Q^{FP}_{12}(f_1,f_2)+\ln f_2 ~ Q^{FP}_{22}(f_2,f_2)+\ln f_2 ~ Q^{FP}_{21}(f_2,f_1)=0$ and so we have equality in the H-theorem.
\end{proof}

\textcolor{black}{\section{The \textit{N}-species case}
\label{secN}
 The two-species case can be  extended to a system of $N$-species that undergo binary collisions. We consider the $N$-species equation,
\begin{align}
\partial_t f_i + v \cdot \nabla_x f_i = \sum_{j=1}^N c_{ij} \text{ div} (\nabla_v (\frac{T_{ij}}{m_i} f_i) +  (v-u_{ij})f_i ) \quad i=1,...,N.
\end{align}
The quantity $c_{ii} $ is the friction constant concerning the interactions of particles of species $i$ with itself whereas $c_{ij} $ is the friction constant concerning the interactions of particles of species $i$ with species $j$, with $i,j=1,...,N, ~ i \neq j$. 
We only have terms of this form and not terms containing indices of more than two species because we consider only binary interactions.
For fixed $i,j\in \{1,\dots,N\}$ the  velocities $u_{ij}$ and $T_{ij}$ will be determined as follows.
The single species velocities and temperatures $u_{ii}, T_{ii}$ and $u_{jj}, T_{jj}$  will be determined  such that they satisfy theorem \ref{constraints_intra}.
The quantities $u_{ij}, T_{ij}$ and $u_{ji},T_{ji}$ will be determined such that we obtain conservation of mass of each species and conservation of total momentum and total energy in interactions between these two species, i.e.,
\begin{align}
\begin{split}
&\int_{\mathbb{R}^d} m_i v Q^{FP}_{ij}(f_i,f_j) dv +
\int_{\mathbb{R}^d} m_j v Q^{FP}_{ji}(f_j,f_i) dv = 0, 
\\
&\int_{\mathbb{R}^d} \frac{m_i}{2} |v|^2 Q^{FP}_{ij}(f_i,f_j) dv +
\int_{\mathbb{R}^d} \frac{m_j}{2} |v|^2 Q^{FP}_{ji}(f_j,f_i) dv = 0,\end{split}
\end{align}
as a straight-forward generalization of theorem \ref{theomom} and theorem \ref{theoen} with indices $i$ and $j$ instead of $1$ and $2$. Note that the free parameters $\alpha, \delta, \gamma$ can be different for each $(i,j)$, so they should be replaced by $\alpha_{ij}, \delta_{ij}, \gamma_{ij}$. 
All the proofs concerning existence and uniqueness of the target Maxwellians and the H-Theorem can be proven exactly in the same way as for two species. For the total entropy $H(f_1,...,f_N) = \int (f_1 \ln f_1  + \dots + f_N \ln f_N) dv$ we obtain
\begin{align}
\partial_t \left(  H(f_1,..., f_N) \right) + \nabla_x \cdot \left(\int  v (f_1 \ln f_1 + \dots + f_N \ln f_N) dv \right) \leq 0.
\end{align}
}

\section{Conclusions}
In this paper, a general Fokker-Planck model for gas mixtures is presented. It extends the one-species Fokker-Planck model to the setting of gas mixtures. This provides a model for gas mixtures which can be used to simulate plasma flows and is more efficient than the full Boltzmann or Landau-Fokker-Planck model.

The model proposed here is presented in the two-species case. It describes the time evolution of both species and the equation of each species contains a sum of two interaction operators, one describing the interaction of particles of the species with itself, one describing the interaction with particles of the other species. The model is constructed such that it satisfies conservation of mass, momentum and energy in the intra-species interactions, and conservation of mass, total momentum and total energy in the interactions with the other species. We provided sufficient conditions under which the model satisfies positivity of all temperatures and an H-Theorem. Moreover, we derived macroscopic equations and computed the exchange of momentum and energy which is transferred from one species to the other species. This provides the possibility to fix the exchange terms for example with the exchange terms of the Boltzmann equation. 

As a future project, we plan to extend this model to the setting of polyatomic molecules. In this setting it is possible that the particles can have degrees of freedom in internal energy in addition to the translational degrees of freedom.

\backmatter

%
%
%

\bmhead{Acknowledgements}

Marlies Pirner was funded by the Deutsche Forschungsgemeinschaft (DFG, German Research Foundation) under Germany’s Excellence Strategy EXC 2044-390685587, Mathematics Münster: Dynamics–Geometry–Structure, by the Alexander von Humboldt foundation and the German Science Foundation DFG (grant no. PI 1501/2-1).



\section*{Declarations}


\begin{itemize}
\item Competing Interests: Not applicable
\item Data availability: The authors declare that the data supporting the findings of this study are available within this paper.
\end{itemize}

\noindent

\bigskip

\bibliography{sn-bibliography}

\end{document}